\newcommand{\bcen}{\begin{center}}     \newcommand{\ecen}{\end{center}}
\newcommand{\bay}{\begin{array}}      \newcommand{\eay}{\end{array}}
\newcommand{\beq}{\begin{eqnarray*}}      \newcommand{\eeq}{\end{eqnarray*}}
\def\gl{\mathrm{gl.dim}}
\def\rad{\mathrm{rad}}
\def\max{\mathrm{sup}}
\def\Hom{\mathrm{Hom}}
\def\Ker{\mathrm{Ker}}
\def\Ext{\mathrm{Ext}}
\def\mod{\mathrm{mod}}
\def\Mod{\mathrm{Mod}}
\def\id{\mathrm{id}}
\def\pd{\mathrm{pd}}
\def\per{\mathrm{per}}
\def\proj{\mathrm{proj}}
\def\inj{\mathrm{inj}}
\def\RHom{\mathrm{RHom}}
\def\Tria{\mathrm{Tria}}
\def\Coloc{\mathrm{Coloc}}
\def\tria{\mathrm{tria}}
\def\thick{\mathrm{thick}}
\begin{document}

\newtheorem{theorem}{Theorem}[section]
\newtheorem{proposition}[theorem]{Proposition}
\newtheorem{lemma}[theorem]{Lemma}
\newtheorem{corollary}[theorem]{Corollary}
\newtheorem{remark}[theorem]{Remark}
\newtheorem{example}[theorem]{Example}
\newtheorem{definition}[theorem]{Definition}
\newtheorem{question}[theorem]{Question}
\numberwithin{equation}{section}

\title{\large\bf Reduction techniques of singular equivalences}

\author{\large Yongyun Qin}

\date{\footnotesize College of Mathematics and Statistics,
Qujing Normal University, \\ Qujing, Yunnan 655011, China. E-mail:
qinyongyun2006@126.com}

\maketitle

\begin{abstract} It is shown that a singular equivalence induced by tensoring
with a suitable complex of bimodules defines a singular equivalence of Morita
type with level, in the sense of Wang. This result is applied to
homological ideals and idempotents to produce new reduction techniques
for testing the properties of syzygy-finite and injectives generation
of finite dimensional algebras over a field.
\end{abstract}

\medskip

{\footnotesize {\bf Mathematics Subject Classification (2010)}:
16G60; 16E35; 16G20}

\medskip

{\footnotesize {\bf Keywords} : singularity categories; homological ideals;
idempotents; syzygy-finite;
injectives generation. }

\bigskip

\section{\large Introduction}

\indent\indent Throughout $k$ is a fixed field and all algebras are finite dimensional
associative $k$-algebras with identity, and all modules are finitely generated left modules
unless stated otherwise.
The {\it singularity category} $D_{sg}(A)$ of an algebra
$A$ is defined as the Verdier quotient of the bounded derived category of finitely generated modules over $A$ by the
full subcategory of perfect complexes \cite{Buch86}, and two algebras are called {\it singularly equivalent} if their singularity
categories are equivalent as triangulated categories.
In particular, derived equivalent algebras are singularly equivalent,
but the converse
is not true in general. For this reason, many scholars devote to extend the properties which are
preserved under derived equivalences to singular equivalences \cite{Chen18, CHQW18, Ska16, Wang19, ZZ13}.
In this respect, a special class of singular equivalences induced by bimodules
is crucially important. This was first studied by Chen and Sun \cite{CS12}
under the name of singular equivalence of Morita type,
and was generalized to
singular equivalence of Morita type with level in \cite{Wang15}.
This equivalence
captures rich structural information, and plays a central role
in the study of homological properties and singular equivalences \cite{Chen18, Ska16, Wang19, ZZ13}.
Therefore, it is of great interest to construct singular equivalences of Morita type with level,
and to find out which properties are invariant under these equivalences.
The purpose of this paper is to complement and extend some results in this literature.

In \cite{CLW20}, Chen-Liu-Wang gave a sufficient condition on when a tensor functor with a bimodule
defines a singular equivalence Morita type with level, and in \cite{Dal20}, Dalezios
proved that
for certain Gorenstein algebras, a singular equivalence induced from tensoring
with a complex of bimodules always induces a singular equivalence of Morita
type with level. Our first theorem is a complex version of
Chen-Liu-Wang's work, and it
generalizes the result of Dalezios
to arbitrary algebra (not limited to Gorenstein algebra).

\medskip

{\bf Theorem I.}  (Theorem~\ref{theorem-sing}) {\it Let $A$ and $B$ be finite-dimensional $k$-algebras
such that $A/\rad(A)$ and $B/\rad(B)$ are separable over $k$.
Consider a complex $X$ of finitely
generated $A$-$B$-bimodules which is perfect over $A$ and $B$.
Assume that $\RHom _A(X, A)$ is a perfect complex of left $B$-module, and
that $X \otimes _B^L- : D_{sg}(B)\rightarrow D_{sg}(A)$ is an equivalence.
Then there is an $A$-$B$-bimodule $M$ and a $B$-$A$-bimodule $N$
such that $(M,N)$ defines a singular equivalence of Morita type with level.
}

\medskip

Theorem I can be applied to homological ideals and idempotents to produce
singular equivalences of Morita
type with level.
Let $A$ be an algebra and let $J\subseteq A$ be a two-sided ideal. Following \cite{PX06},
$J$ is a {\it homological ideal} if the canonical map $A \rightarrow  A/J$ is a homological
epimorphism, that is, the naturally induced functor $\mathcal{D}^b(\mod A/J) \rightarrow
\mathcal{D}^b(\mod A)$ is fully faithful. In \cite{Chen14}, Chen proved that
if $J$ is a homological ideal which has finite projective dimension as an $A$-$A$-bimodule,
then there is a singular equivalence between $A$ and $A/J$.
In this paper we show that this equivalence is a singular equivalence
of Morita type with level (cf. Theorem~\ref{thm-homo-epic}).

Let $e \in A$ be an idempotent.
Then the functor $eA\otimes _A-: \mod A\rightarrow
\mod eAe$ induces a singular equivalence between
$A$ and $eAe$ if and only if $\pd _A(\frac {A/AeA}{\rad (A/AeA)})<\infty$ and $\pd _{eAe}eA<\infty$,
see \cite{Chen09, PSS14}. Similarly, $Ae\otimes _{eAe}-: \mod eAe\rightarrow
\mod A$ induces a singular equivalence if and only if $\id _A(\frac {A/AeA}{\rad (A/AeA)})
\linebreak
<\infty$ and $\pd Ae_{eAe}<\infty$,
see \cite{Shen20}. Applying Theorem I, we show that
$A$ and $eAe$ are
singularly equivalent of Morita type with level in these two cases, see Theorem~\ref{thm-idem-pd} and Theorem~\ref{thm-idem-id}.

Next, we turn to the question that which properties can be preserved under singular equivalence of Morita type with level.
It is known that the finitistic dimension conjecture is invariant under this equivalence \cite{Wang15}.
In this paper, we focus on
the properties of syzygy-finite, Igusa-Todorov, injectives generation
and projectives cogeneration, all of which are closely related to finitistic dimension conjecture
\cite{Ric19, Wei09, ZH95}.
We show that these properties are also invariant under singular equivalence of Morita type with level
(cf. Proposition~\ref{prop-syz} and Proposition~\ref{prop-inj-gen}).
As applications,
we obtain the following reduction techniques for testing these properties for
finite dimensional algebras over a field.

\medskip

{\bf Corollary I.} (Corollary~\ref{coro-homo-epic-syz})
{\it Let $A$ be a finite-dimensional algebra over a field and let $J\subseteq A$ be a homological
ideal which has finite projective dimension as an $A$-$A$-bimodule. Then
$A$ has the property of syzygy-finite (resp. Igusa-Todorov, injectives
generation, projectives cogeneration) if and only if so does $A/J$.
}

\medskip

\medskip

{\bf Corollary II.} (Corollary~\ref{cor-iedm-inj-gen})
{\it Let $A$ be a finite-dimensional algebra over a field $k$ with separable semisimple quotient, and
let $e \in A$ be an idempotent such that
$Ae\otimes _{eAe}^LeA$ is bounded in
cohomology. If $\pd _A(\frac {A/AeA}{\rad (A/AeA)})
<\infty$ or $\id _A(\frac {A/AeA}{\rad (A/AeA)})
<\infty$, then $A$ has the property of syzygy-finite (resp. Igusa-Todorov, injectives
generation, projectives cogeneration) if and only if so does $eAe$.
}

\medskip

Corollary II may be compared with a recent result by Cummings \cite{Cum19}.
For a ring $A$, Cummings proved that if $Ae\otimes _{eAe}^LeA$ is bounded in
cohomology, then (i) if $\id _A(\frac {A/AeA}{\rad (A/AeA)})
<\infty$ and injectives generate for $eAe$, then injectives generate for $A$;
(ii) if $\pd _A(\frac {A/AeA}{\rad (A/AeA)})
<\infty$ and projectives generate for $eAe$, then projectives generate for $A$.
Therefore, if we only consider finite-dimensional $k$-algebras
with separable semisimple quotients (for instance,
it is the case when $k$ is algebraically closed),
then the property of injectives
generation (resp. projectives cogeneration) between $A$ and $eAe$
can be
displayed more completely.

In Corollary II, the transition from $A$ to $eAe$ is called {\it vertex removal}
in some literature \cite{FS92, GPS18}. If we restrict our discussion to quiver algebras,
then we get a practical method for testing the properties of syzygy-finite,
Igusa-Todorov, injectives generation and projectives cogeneration
--- just removing the vertices where no relations start or no relations end (cf. Corollary~\ref{cor-path-alg}).

The paper is organized as follows. In section 2, we will recall some relevant
definitions and conventions.
In section 3 we prove Theorem I, and we show that a certain homological ideal
induces a singular equivalence of Morita type with level.
In section 4 we construct singular equivalences of Morita type with level by idempotents.
In section 5, we investigate the invariance of syzygy-finite and injectives generation
under singular equivalence of Morita type with level, and we prove Corollary I
and Corollary II. In particular, we give two examples to illustrate how our reduction techniques
can be used.

 \section{\large Definitions and conventions}\label{Section-definitions and conventions}

\indent\indent
Let $\mathcal{C}$ be a triangulated category which has all (set-indexed)
products and coproducts.
An object $X$ of $\mathcal{C}$ is {\it compact} if the functor $\Hom_\mathcal{C}(X, -)$
preserves coproducts. For a set $\mathcal{S}$ of objects
of $\mathcal{C}$, we denote by $\tria \mathcal{S}$ the smallest triangulated
subcategory of $\mathcal{C}$ containing $\mathcal{S}$,
and by $\thick \mathcal{S}$ (resp. $\Tria \mathcal{S}$, $\Coloc \mathcal{S}$) the smallest triangulated
subcategory of $\mathcal{C}$ containing $\mathcal{S}$
and closed under taking direct summands (resp. coproducts, products).
$\mathcal{S}$ is called a set of {\it compact generators} of $\mathcal{C}$
if all objects in $\mathcal{S}$ are compact and $\mathcal{C}=\Tria \mathcal{S}$.
In this paper, all functors between triangulated categories are assumed to be triangle functors.

\begin{definition}{\rm (\cite{BBD82})
Let $\mathcal{T}_1$, $\mathcal{T}$ and $\mathcal{T}_2$ be
triangulated categories. A {\it recollement} of $\mathcal{T}$
relative to $\mathcal{T}_1$ and $\mathcal{T}_2$ is given by
$$\xymatrix@!=4pc{ \mathcal{T}_1 \ar[r]^{i_*=i_!} & \mathcal{T} \ar@<-3ex>[l]_{i^*}
\ar@<+3ex>[l]_{i^!} \ar[r]^{j^!=j^*} & \mathcal{T}_2
\ar@<-3ex>[l]_{j_!} \ar@<+3ex>[l]_{j_*}}\ \ \ \ \ \ \ (R)$$
such that

(R1) $(i^*,i_*), (i_*,i^!), (j_!,j^*)$ and $(j^*,j_*)$ are adjoint
pairs;

(R2) $i_*$, $j_!$ and $j_*$ are full embeddings;

(R3) $j^*i_*=0$ (and thus also $i^!j_*=0$ and $i^*j_!=0$);

(R4) for each $X \in \mathcal {T}$, there are triangles

$$\begin{array}{l} j_!j^*X \rightarrow X  \rightarrow i_*i^*X  \rightarrow
\\ i_!i^!X \rightarrow X  \rightarrow j_*j^*X  \rightarrow
\end{array}$$ where the arrows to and from $X$ are the counits and the
units of the adjoint pairs respectively. }
\end{definition}

\begin{definition}{\rm (\cite{BGS88, QH16})
Let $\mathcal{T}_1$, $\mathcal{T}$ and $\mathcal{T}_2$ be
triangulated categories, and $n$ a positive integer. An {\it
$n$-recollement} of $\mathcal{T}$ relative to $\mathcal{T}_1$ and
$\mathcal{T}_2$ is given by $n+2$ layers of triangle functors
$$\xymatrix@!=4pc{ \mathcal{T}_1 \ar@<+1ex>[r] \ar@<-3ex>[r]_\vdots & \mathcal{T}
\ar@<+1ex>[r]\ar@<-3ex>[r]_\vdots \ar@<-3ex>[l] \ar@<+1ex>[l] &
\mathcal{T}_2 \ar@<-3ex>[l] \ar@<+1ex>[l]}$$ such that every
consecutive three layers form a recollement.}
\end{definition}

Let $A$ be a finite dimensional associative
algebra over a field $k$.
Denote by $\rad(A)$ the Jacobson radical of $A$. The semisimple quotient $A/ \rad(A)$
is called {\it separable} if $A/ \rad(A)$ remains semisimple under any extension of scalars to a field $K$
containing $k$. In particular, $A/ \rad(A)$ is separable if $k$ is an algebraically closed field.

Denote by $\Mod A$ the
category of left $A$-modules, and by $\mod A$, $\proj A$
and $\inj A$ the full subcategories consisting of all finitely
generated modules, finitely
generated projective modules and finitely
generated injective modules, respectively. We denote by
$\underline{\mod }A$ (resp. $\overline{\mod} A$) the projective
(resp. injective) stable category
of $\mod A$ modulo morphisms factoring
through projective (resp. injective) modules.

Let $\mathcal{X}$ be a subcategory of $\Mod A$. A (chain) complex $X$ over $\mathcal{X}$
is a set $\{ X_i\in \mathcal{X}, i \in \mathbb{Z}\}$
equipped with a set of homomorphisms $\{ d^i_X: X_i \rightarrow X_{i-1}, i \in \mathbb{Z}|d^i_X
d^{i+1}_X=0\}$. We usually write
$X=\{ X_i, d^i_X\}$. A chain map $f$ between complexes, say from $\{ X_i, d^i_X\}$
to $\{ Y_i, d^i_Y\}$
is a set of maps $f=\{f_i : X_i \rightarrow Y_i\}$ such that $f_{i-1}d^i_X=d^i_Yf_i$.
A complex $X=\{ X_i, d^i_X\}$ is
right (resp. left) bounded if $X_i=0$ for all but finitely many negative (resp. positive) integers
$i$. A complex $X$ is bounded if it is both left and right bounded, equivalently, $X_i=0$ for all but
finitely many $i$. We denote by $[1]$ the left shift functor on complexes.

Let $\mathcal{D}(\Mod A)$ (resp. $\mathcal{D}^b(\mod A)$)
be the derived category (resp. bounded derived category) of complexes over $\Mod A$ (resp. $\mod A$). Let
$K^b (\proj A)$ (resp. $K^b (\inj A)$) be the bounded homotopy category of complexes
over $\proj A$ (resp. $\inj A$).
Up to isomorphism, the objects in $K^{b}(\proj A)$ are
precisely all the compact objects in $\mathcal{D}(\Mod A)$. For
convenience, we do not distinguish $K^{b}(\proj A)$ from the {\it
perfect derived category} $\mathcal{D}_{\per}(A)$ of $A$, i.e., the
full triangulated subcategory of $\mathcal{D}(\Mod A)$ consisting of all
compact objects, which will not cause any confusion. Moreover, we
also do not distinguish $K^b(\inj A)$ (resp.
$\mathcal{D}^b(\mod A)$) from their essential images under the
canonical full embeddings into $\mathcal{D}(\Mod A)$.
Usually, we
just write $\mathcal{D} A$ (resp. $\mathcal{D}^b(A)$) instead of $ \mathcal{D}(\Mod A)$ (resp. $ \mathcal{D}^b(\mod A)$).

Let $A$ and $B$ be finite dimensional
algebras over a field $k$
and $F : \mathcal{D} A\rightarrow \mathcal{D} B$ be a triangle functor. We
say that $F$ {\it restricts} to $K^b (\proj )$ (resp.
$\mathcal{D}^b(\mod)$, $K^b(\inj )$) if $F$ sends $K^b (\proj A)$ (resp.
$\mathcal{D}^b(\mod A)$, $K^b(\inj A)$) to $K^b (\proj B)$ (resp.
$\mathcal{D}^b(\mod B)$, $K^b(\inj B)$).

Following \cite{Buch86, Orl04}, the {\it singularity category} of $A$ is defined to be the
Verdier quotient $D_{sg}(A) = \mathcal{D}^b(\mod A)/K^{b}(\proj A)$.
Let $A^e = A\otimes _k A^{op}$ be the enveloping algebra of $A$. We identify
$A$-$A$-bimodules with left $A^e $-modules. Denote by $\Omega _{A^e}(-)$ the syzygy functor on the
stable category $\underline{\mod} A^e$ of $A$-$A$-bimodules. The following terminology is
due to Wang \cite{Wang15}.

\begin{definition}
{\rm Let $_AM_B$ and $_BN_A$ be an $A$-$B$-bimodule and a $B$-$A$-bimodule,
respectively, and let $n \geq 0$. We say $(M,N)$ defines a {\it singular equivalence
of Morita type with level} $n$, provided that the following conditions are satisfied:

(1) The four one-sided modules $_AM$, $M_B$, $_BN$ and $N_A$ are all finitely generated
projective.

(2) There are isomorphisms $M\otimes _B N \cong \Omega _{A^e}^n(A)$ and
$N\otimes _A M\cong \Omega _{B^e}^n(B)$ in
$\underline{\mod} A^e$ and $\underline{\mod} B^e$, respectively.}

\end{definition}

\begin{remark}
{\rm If $(M,N)$ defines a singular equivalence
of Morita type with level $n$, then the functor
$M\otimes _B-$ induces a singular equivalence
between $A$ and $B$, that is, $M\otimes _B- : D_{sg}(B)\rightarrow D_{sg}(A) $
is a triangle equivalence. This equivalence
preserves many homological properties and homological conjectures, such as
Hochschild homology \cite{Wang15},
Fg condition \cite{Ska16}, Keller's conjecture \cite{CLhW20} and the finitistic dimension
conjecture \cite{Wang15}.}
\end{remark}

Recall that an algebra $A$ is {\it syzygy-finite} provided that there is an integer $s$ such that the class
of all $n$-th syzygies, where $n > s$, is representation finite, or equivalently, the number of nonisomorphic
indecomposable modules in the class is finite.
Such class of algebras
include algebras of finite global dimension, algebras of finite representation type, monomial
algebras and serial algebras.

From \cite{Wei09}, an algebra $A$ is called {\it Igusa-Todorov} if there are a fixed $A$-module $V$ and an integer $n$ such that every
$n$-th syzygy module $M$ fits into an exact sequence $0 \rightarrow
V_1 \rightarrow V_0 \rightarrow M \rightarrow 0$, where $V_1$, $V_0$
are some direct summands of finite direct sums of $V$.
Examples of such
algebras include syzygy-finite algebras, algebras with representation dimension
not more than three and algebras with infinite-layer length not more than three \cite{HLM09}.
It is known that
Igusa-Todorov algebras satisfy the finitistic dimension conjecture,
and the invariance of syzygy-finite and Igusa-Todorov
under recollements and derived equivalences is discussed in \cite{Wei17, WW20}.

Let $A$ be an algebra and $D:=\Hom _k(-,k)$ be the standard duality.
If $\Tria DA = \mathcal{D} A$ then we say that {\it injectives generate} for $A$,
and dually, if $\Coloc A = \mathcal{D} A$ then we say {\it projectives cogenerate} for $A$.
These concepts were proposed by Keller \cite{Kel01} as they are well-connected with
some homological conjectures. In particular, if injectives
generate for an algebra $A$, then $A$ satisfies the Nunke condition,
the Generalised Nakayama conjecture and the finitistic dimension conjecture \cite{Kel01, Ric19}.
Moreover, if projectives
cogenerate for $A$, then its opposite algebra $A^{op}$ satisfies
the finitistic dimension conjecture \cite{Ric19}.

Nowadays, there is no known example of
a finite dimensional algebra over a field for which injectives do not generate,
and the property of injectives generation has been verified
for commutative
algebras, Gorenstein algebras and monomial algebras \cite[Theorem 8.1]{Ric19}.
On the other hand, the properties of injectives generation and projectives
cogeneration are shown invariant under recollements and derived equivalences
of algebras \cite{Cum19, Ric19}.

\section{Singular equivalences induced by complexes}
\indent\indent In this section, we will investigate when a tensor functor giving by a bi-module
complex induces a singular equivalence of Morita type with level.
Let us make some notations. We denote by $\Omega _{A}(-)$ (resp. $\Omega _{A-B}(-)$) the syzygy functor on
the stable category of $A$-modules (resp. $A$-$B$-bimodules), and $\Omega _{\mathcal{D}^b(A)}(-)$ the syzygy functor on
derived category, up to some direct summands of projective
modules.
We point that $\Omega _A(M)= \Omega _{\mathcal{D}^b(A)}(M)$ for any $M\in \mod A$,
and we refer to \cite{AI07, Wei17} for more details on syzygies of complexes.

\begin{theorem}\label{theorem-sing}
Suppose that both $A/\rad(A)$ and $B/\rad(B)$ are separable over $k$.
Consider a complex $X$ of finitely
generated $A$-$B$-bimodules which is perfect over $A$ and $B$.
Assume that $\RHom _A(X, A)$ is a perfect complex of left $B$-module, and
that $X \otimes _B^L- : D_{sg}(B)\rightarrow D_{sg}(A)$ is an equivalence.
Then there is an $A$-$B$-bimodule $M$ and a $B$-$A$-bimodule $N$
such that $(M,N)$ defines a singular equivalence of Morita type with level.
\end{theorem}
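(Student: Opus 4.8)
The plan is to reduce the given ``complex-level'' singular equivalence to the bimodule situation of \cite{CLW20}, then invoke their result. First I would exploit the hypotheses on $X$ to replace $X$ by a single bimodule up to shift. Since $X$ is perfect over $A$ and over $B$ and $\RHom_A(X,A)$ is perfect over $B$, the complex $X$ behaves like a ``two-sided tilting-type'' object in a weak sense: truncating and taking a suitable syzygy should produce an $A$-$B$-bimodule $M$, projective on both sides, together with a shift $n$ so that $M$ is isomorphic in $D_{sg}(A\otimes_k B^{\op})$-type terms to $X[n]$ (more precisely, $M \otimes_B^L -$ and $X\otimes_B^L-$ agree up to shift on $D_{sg}(B)$). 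Concretely I would take a bounded-above projective resolution $P^\bullet \to X$ over $A\otimes_k B^{\op}$, brutally truncate far enough to the left, and let $M$ be the truncated tail's kernel; the perfectness of $X$ over $A$ and $B$ guarantees that $M$ is projective as a one-sided module on each side (this is where separability of the semisimple quotients enters, via the standard fact that a complex of bimodules that is perfect on one side has a ``one-sided projective'' syzygy). Similarly, from $\RHom_A(X,A)$ perfect over $B$ I would extract a $B$-$A$-bimodule $N$, one-sided projective on both sides, with $N\otimes_A^L-$ inducing the quasi-inverse singular equivalence up to shift.

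Next I would verify that the pair $(M,N)$ manufactured this way satisfies the hypotheses of the Chen--Liu--Wang criterion: namely that $M$ and $N$ are finitely generated projective on each side (done by construction), and that $M\otimes_B^L - : D_{sg}(B) \to D_{sg}(A)$ is an equivalence with quasi-inverse $N\otimes_A^L-$. The latter follows because $M\otimes_B^L-$ agrees with $X\otimes_B^L-$ up to the shift $[n]$ and $D_{sg}$-shift is an autoequivalence, so the equivalence hypothesis on $X$ transfers to $M$; and $N\otimes_A^L-$ is its inverse because $\RHom_A(X,A)$ computes the (one-sided) adjoint, which on singularity categories is the inverse of an equivalence. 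Then \cite{CLW20} (in its complex form, which is exactly what ``Theorem I is a complex version of Chen--Liu--Wang's work'' is asserting) gives integers and isomorphisms $M\otimes_B N \cong \Omega_{A^e}^n(A)$ and $N\otimes_A M \cong \Omega_{B^e}^n(B)$ in the respective stable bimodule categories, after possibly raising $n$ by applying $\Omega$ to both sides (syzygy of a bimodule that is one-sided projective on both sides stays one-sided projective, so the level can always be increased). This yields precisely a singular equivalence of Morita type with level.

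The main obstacle I expect is the passage from the complex $X$ to the honest bimodule $M$ while \emph{simultaneously} keeping control of the one-sided projectivity on both sides \emph{and} the identification of $N$ with the (two-sided) adjoint rather than merely a one-sided one. The subtlety is that $\RHom_A(X,A)$ is a priori only a complex of left $B$-modules, and to run the Morita-with-level machinery one needs a genuine $B$-$A$-bimodule structure compatible with the singular-equivalence inverse; reconciling the derived $\RHom_A(X,A)$ with a concrete bimodule representative, and checking that the counit/unit of the adjunction descend to the required stable-category isomorphisms $M\otimes_B N\cong \Omega^n_{A^e}(A)$ rather than just to an abstract isomorphism of functors, is the technical heart. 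Separability of $A/\rad A$ and $B/\rad B$ is the lever that makes this work: it ensures $A^e$ and $B^e$ are again well-behaved (e.g. that $A$ has a nice bimodule resolution and that one-sided projectivity plus boundedness forces the relevant syzygies to be genuinely projective bimodules-up-to-projectives), so that the stable category $\underline{\mod}\,A^e$ computations needed to pin down the level are valid. Once $M$, $N$, and $n$ are in hand, the remaining verifications of Definition~2.3(1)--(2) are routine diagram chases with $\Omega$ and $\otimes$.
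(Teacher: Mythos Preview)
Your overall strategy—replace $X$ and $Y=\RHom_A(X,A)$ by one-sided-projective bimodules $M,N$ via truncation/syzygy (this is exactly Dalezios's Proposition~4.4), then verify the Morita-with-level conditions—is the paper's strategy too. The gap is in the step where you ``invoke \cite{CLW20}'' to finish.

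First, the Chen--Liu--Wang criterion does not take a pair $(M,N)$ as input; being the module case of the present theorem, it takes a single bimodule $M$ together with the hypothesis that $\Hom_A(M,A)$ is perfect over $B$. That hypothesis is \emph{not} preserved when you pass from $X$ to a bimodule syzygy $U$: in $\mathcal{D}(B)$ the module $\Hom_A(U,A)$ differs from $Y$ by $\RHom_A(P,A)$ with $P$ a perfect $A\otimes_kB^{\op}$-complex, and since $\Hom_A(A\otimes_kB,A)\cong DB\otimes_kA$ as a left $B$-module, this difference lies in $K^b(\inj B)$, not $K^b(\proj B)$. For non-Gorenstein $B$ the hypothesis of \cite{CLW20} can therefore fail for $U$, so you cannot apply it as a black box.

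Second, even under your stated version of the hypotheses (one-sided projective $M,N$ with $M\otimes_B-$ and $N\otimes_A-$ mutually inverse on singularity categories), you still need an actual morphism $M\otimes_BN\to A$ in $\mathcal{D}(A^e)$ to which Dalezios's lifting lemma can be applied; an abstract isomorphism of endofunctors on $D_{sg}(A)$ does not supply one. Your independently extracted $M,N$ carry no adjunction (the right adjoint of $M\otimes_B-$ is $\Hom_A(M,-)$, not $N\otimes_A-$), hence no unit/counit to test.

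The paper avoids both problems by reversing the order of operations. It keeps the complexes $X,Y$ and uses the unit $\eta_B:B\to Y\otimes_A^LX$ and counit $\epsilon_A:X\otimes_B^LY\to A$, which are genuine maps in $\mathcal{D}(B^e)$ and $\mathcal{D}(A^e)$; Dalezios's Theorem~3.6 (this is the real use of separability) shows their cones are perfect bimodule complexes. Only \emph{after} this lifting to $D_{sg}$ of the enveloping algebras does one pass to modules: resolving $X,Y$ as in Proposition~4.4 with top terms $U,V$, the tensor product $Y\otimes_A^LX$ has $V\otimes_AU$ on top over projective bimodules, whence $\Omega^r_{B^e}(B)\cong\Omega^{r-n-n'-2}_{B-A}(V)\otimes_AU$ and symmetrically $\Omega^r_{A^e}(A)\cong U\otimes_B\Omega^{r-n-n'-2}_{B-A}(V)$ for $r$ large. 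Thus the pair is $(U,\Omega^{r-n-n'-2}_{B-A}(V))$, produced after the enveloping-algebra step rather than before.
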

\begin{proof}
Set $Y=\RHom _A(X, A)$. Since $_AX$ is compact, we have an isomorphism of functors
$$\RHom _A(X, -)\cong Y\otimes_A^L-:
\mathcal{D}(A)\rightarrow \mathcal{D}(B).$$ Hence, there is an adjoint pair
$$\xymatrix{ \mathcal{D}(B)\ar@<+1ex>[rr]^{X \otimes _B^L-}
&&\mathcal{D}(A)\ar@<+1ex>[ll]^{Y \otimes _A^L-}}$$
with unit $\eta$ and counit $\epsilon$.
Since $_BY\in K^b(\proj B)$, these adjoint functors restrict to one at the level of singularity categories
(ref. \cite[Lemma 1.2]{Orl04}).
By assumption, $X \otimes _B^L- : D_{sg}(B)\rightarrow D_{sg}(A)$ is an equivalence,
and it follows from \cite[Theorem 3.6]{Dal20} that there are two isomorphisms
$B\cong Y\otimes_A^L X$ in $D_{sg}(B^e)$ and
$A\cong X\otimes_B^L Y$ in $D_{sg}(A^e)$ (for this we need the assumption on
separability). Therefore, the mapping cones of $\eta _B : B\rightarrow Y\otimes_A^L X$
and $\epsilon _A : X\otimes_B^L Y\rightarrow A$ are perfect complexes of bimodules,
and by \cite[Proposition 3.8]{Wei17}, there exists some $l\in \mathbb{Z}$ such that
for any $i\geq l$, there are two isomorphisms
$\Omega ^i _{\mathcal{D}^b(B^e)}(B)\cong
\Omega ^i _{\mathcal{D}^b(B^e)}(Y\otimes_A^L X)$ and $\Omega ^i _{\mathcal{D}^b(A^e)}(A)\cong
\Omega ^i _{\mathcal{D}^b(A^e)}(X\otimes_B^L Y)$, up to some direct summands of projective bimodules
(see \cite[Proposition 3.5]{Wei17}).

From \cite[Proposition 4.4]{Dal20}, $X$ is isomorphic
in $\mathcal{D}(A\otimes _k B^{op})$ to a complex
$$0 \longrightarrow U \longrightarrow P_n \longrightarrow
\cdots \longrightarrow P_m \longrightarrow 0 ,$$
where all $P_i$ are finitely generated projective
$A$-$B$-bimodules and $U$ is finitely
generated projective as a left $A$-module and as a right $B$-module.
Similarly, $Y$ is isomorphic
in $\mathcal{D}(B\otimes _k A^{op})$ to a complex
$$0 \longrightarrow V \longrightarrow Q_{n'} \longrightarrow
\cdots \longrightarrow Q_{m'} \longrightarrow 0 ,$$
where all $Q_i$ are finitely generated projective
$B$-$A$-bimodules and $V$ is finitely
generated projective as a left $B$-module and as a right $A$-module.
Therefore, $Y\otimes_A^L X$ is quasi-isomorphic to the tensor product complex:
$$0 \longrightarrow V\otimes _AU \longrightarrow Z_{n+n'+1} \longrightarrow
\cdots \longrightarrow Z_{m+m'} \longrightarrow 0 ,$$
where all $Z_i$ are projective over $B^e$.
Hence, for any $i\geq n+n'+2$, we have $$\Omega ^i _{\mathcal{D}^b(B^e)}(Y\otimes_A^L X)\cong
\Omega ^{i-n-n'-2} _{B^e}(V\otimes _AU)
\cong
\Omega ^{i-n-n'-2} _{B-A}(V)\otimes _AU,$$
where the last isomorphism follows from the fact that
$U$ is projective as a left $A$-module and as a right $B$-module.
Similarly,  we obtain that
$$\Omega ^i _{\mathcal{D}^b(A^e)}(X\otimes_B^L Y)\cong
\Omega ^{i-n-n'-2} _{A^e}(U\otimes _BV)
\cong
U\otimes _B\Omega ^{i-n-n'-2} _{B-A}(V),$$
for any $i\geq n+n'+2$.
Taking $r=\max \{l, n+n'+2 \}$, we have isomorphisms
$$\Omega ^r_{B^e}(B)\cong \Omega ^r _{\mathcal{D}^b(B^e)}(Y\otimes_A^L X)\cong
\Omega ^{r-n-n'-2} _{B-A}(V)\otimes _AU$$ and
$$\Omega ^r_{A^e}(A)\cong \Omega ^r _{\mathcal{D}^b(A^e)}(X\otimes_B^L Y)\cong
U\otimes _B\Omega ^{r-n-n'-2} _{B-A}(V),$$
up to some projective direct summands.

Since $V$ is finitely
generated projective as a left $B$-module and as a right $A$-module,
$\Omega ^{r-n-n'-2} _{B-A}(V)$ is also finitely
generated projective as an one-side module.
Above all, we conclude that $(U, \Omega ^{r-n-n'-2} _{B-A}(V)) $
defines a singular equivalence of Morita type with level $r$.

\end{proof}

\begin{proposition}\label{prop-recoll}
Let $A$, $B$ and $C$ be finite dimensional $k$-algebras such that either
$A/\rad(A)$ or $B/\rad(B)$ is separable over $k$. If
$\gl B<\infty$ and $\mathcal{D}A$ admits a $2$-recollement relative to $\mathcal{D}B$
and $\mathcal{D}C$, then $A$ and $C$ are singularly equivalent of Morita type with level.
\end{proposition}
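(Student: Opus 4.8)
The plan is to run Theorem~\ref{theorem-sing} with $A$ and $C$ playing the roles of the two algebras there, so I first need a bimodule complex realizing the gluing functor. Writing the middle three layers of the $2$-recollement as an ordinary recollement of $\mathcal{D}A$ relative to $\mathcal{D}B$ and $\mathcal{D}C$, with functors $i_*,i^*,i^!,j_!,j^*,j_*$, the object $j_!(C)$ is compact in $\mathcal{D}A$ and $\RHom_A(j_!(C),j_!(C))\cong C$ concentrated in degree $0$; hence by the Morita theory for derived categories $j_!(C)$ is represented by a bounded complex $X$ of finitely generated $A$-$C$-bimodules with $_AX$ perfect, $j_!\cong X\otimes_C^L-$ and $j^*\cong\RHom_A(X,-)\cong Y\otimes_A^L-$, where $Y:=\RHom_A(X,A)$ (so $Y_A$ is perfect). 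The extra layer that distinguishes a $2$-recollement from a recollement is precisely what pushes this through to the small categories: by the structure theory of $n$-recollements (\cite{QH16}) the $2$-recollement restricts to a recollement of $K^b(\proj A)$ relative to $K^b(\proj B)$, $K^b(\proj C)$ and to one of $\mathcal{D}^b(\mod A)$ relative to $\mathcal{D}^b(\mod B)$, $\mathcal{D}^b(\mod C)$; since $j_!$ and $j^*$ then restrict, $X$ is perfect over $C$ and $Y=j^*(A)=\RHom_A(X,A)$ is perfect as a left $C$-module.

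Next I would pass to singularity categories. As $\gl B<\infty$ we have $\mathcal{D}^b(\mod B)=K^b(\proj B)$, so $D_{sg}(B)=0$; the two restricted recollements above are compatible, so taking Verdier quotients yields a recollement of $D_{sg}(A)$ relative to $D_{sg}(B)=0$ and $D_{sg}(C)$. A recollement whose left-hand term is zero is nothing but an equivalence through its quotient functors, so $X\otimes_C^L-:D_{sg}(C)\rightarrow D_{sg}(A)$ is a triangle equivalence. (One can argue directly as in the proof of Theorem~\ref{theorem-sing}: since $_AX$ and $_CY$ are perfect, the adjoint pair $(X\otimes_C^L-,\,Y\otimes_A^L-)$ descends to the singularity categories, and the vanishing of the left term of the induced recollement forces this descended pair to be an equivalence.)

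At this point $X$ is a complex of finitely generated $A$-$C$-bimodules, perfect over $A$ and over $C$, with $\RHom_A(X,A)$ perfect as a left $C$-module and $X\otimes_C^L-$ a singular equivalence, so Theorem~\ref{theorem-sing} supplies an $A$-$C$-bimodule $M$ and a $C$-$A$-bimodule $N$ defining a singular equivalence of Morita type with level between $A$ and $C$. The point needing care is that Theorem~\ref{theorem-sing} requires \emph{both} $A/\rad(A)$ and $C/\rad(C)$ to be separable, whereas the hypothesis only gives one of $A/\rad(A)$, $B/\rad(B)$; here I would use that the restricted recollement of $K^b(\proj A)$ constrains the semisimple quotient of $A$ compatibly with those of $B$ and $C$ (so that $C/\rad(C)$ is, up to the relevant invariants, a direct factor of $A/\rad(A)$ and separability descends), treating the two cases of the disjunctive hypothesis separately. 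I expect the two genuine obstacles to be (i) establishing that the gluing functors are honestly $\otimes^L$/$\RHom$ with bimodule complexes carrying the stated one-sided finiteness — i.e. the full strength of the $2$-recollement restriction theorem — and (ii) this separability bookkeeping; everything else is formal manipulation of adjoints and Verdier quotients.
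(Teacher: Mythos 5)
Your overall strategy — produce a bimodule complex realizing the gluing functor, show it is perfect on both sides and induces a singular equivalence, and then feed it into Theorem~\ref{theorem-sing} — is close to the spirit of the paper. But there is a genuine gap at exactly the point you flag and then try to wave away: the separability mismatch is not a bookkeeping issue, and your proposed patch does not work.

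Theorem~\ref{theorem-sing} (via Dalezios's result) requires \emph{both} $A/\rad(A)$ and $C/\rad(C)$ to be separable, because it needs to lift two singular equivalences of one-sided modules to bimodule-level isomorphisms $C\cong X\otimes_A^L Y$ in $D_{sg}(C^e)$ and $A\cong Y\otimes_C^L X$ in $D_{sg}(A^e)$. Your hypothesis supplies only one of $A/\rad(A)$, $B/\rad(B)$, and in general a recollement (or $2$-recollement) of $\mathcal{D}A$ relative to $\mathcal{D}B$ and $\mathcal{D}C$ does \emph{not} split $A/\rad(A)$ as a product of $B/\rad(B)$ and $C/\rad(C)$, nor does it force $C/\rad(C)$ to be a factor of $A/\rad(A)$; the decomposition lives only at the triangulated level (think of $C$ being a dg endomorphism ring rather than $eAe$). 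So ``separability descends'' is simply unavailable as a general principle, and invoking Theorem~\ref{theorem-sing} as a black box cannot work.

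The paper's proof sidesteps this by \emph{not} invoking Theorem~\ref{theorem-sing} itself but only the back half of its proof, and by establishing the two required bimodule isomorphisms by hand. The first one, $C\cong X\otimes_A^L Y$ in $D_{sg}(C^e)$, comes for free: the counit $X\otimes_A^L\RHom_C(X,C)\to C$ is already a morphism of $C$-$C$-bimodule complexes, and it is an isomorphism in $\mathcal{D}C$ because $j_*$ is fully faithful, hence an isomorphism in $\mathcal{D}(C^e)$ — no separability is used at all. Only the second isomorphism, $A\cong Y\otimes_C^L X$ in $D_{sg}(A^e)$, needs a separability input, and here the paper branches: if $A/\rad(A)$ is separable it applies \cite[Lemma~3.5]{Dal20} to the unit $\eta_A$; if instead $B/\rad(B)$ is separable it uses $\gl B<\infty$ together with \cite[Lemma~7.2]{Rou08} to get $B\in K^b(\proj B^e)$, then Han's recollement of bimodule derived categories \cite{Han14} to produce the triangle $U\otimes_B V\to A\to Y\otimes_C^L X\to$ in $\mathcal{D}(A^e)$ with $U\otimes_B V$ perfect. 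Your proposal is missing precisely this asymmetric treatment of the two isomorphisms and the two separability cases; without it the argument does not close.
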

\begin{proof}
By \cite[Proposition 1]{QH16}, there is a
standard $2$-recollement
$$\xymatrix@!=4pc{ \mathcal{D}B \ar@<+1ex>[r]|{i_*} \ar@<-3ex>[r] & \mathcal{D}A
\ar@<+1ex>[r]|{j^*}\ar@<-3ex>[r] \ar@<-3ex>[l] \ar@<+1ex>[l]|{i^!} &
\mathcal{D}C \ar@<-3ex>[l] \ar@<+1ex>[l]|{j_*}}.$$
Then, it follows from \cite[Lemma 2.9]{AKLY17} that $i^!$ restricts to $\mathcal{D}^b(\mod )$,
and then $i^!(A)\in \mathcal{D}^b(\mod B)\cong K^b(\proj B)$ since $\gl B<\infty$. Therefore,
this $2$-recollement can be extended one step downwards, see \cite[Proposition 3.2]{AKLY17}.
So, there are four bimodule complexes $_CX_A$, $_AY_C$, $_AU_B$ and $_BV_A$
(which are perfect complexes of one-side modules)
such that $i_*=U\otimes _B^L-$, $i^!=V\otimes _A^L-$,
$j^*=X\otimes _A^L-$ and $j_*=Y\otimes _C^L-$, where $Y=\RHom _C(X,C)$.
On the other hand, it follows from \cite[Proposition 3]{Qin20} that
$j^*$ and $j_*$ induce a mutually inverse equivalence
between $D_{sg}(A)$ and $D_{sg}(C)$.

Since $j_*$ is fully faithful, there is an isomorphism
$C\cong X\otimes_A^LY$ in $\mathcal{D}C$. Moreover,
the canonical map $$X\otimes_A^LY=X\otimes_A^L\RHom _C(X,C)\rightarrow C,
x\otimes f\mapsto f(x)$$ is a morphism of $C$-$C$-bimodules.
Therefore, we get
$C\cong X\otimes_A^LY$ in $\mathcal{D}(C^e)$ and thus $C\cong X\otimes_A^LY$ in $D_{sg}(C^e)$.
Now we claim $A\cong Y\otimes_C^LX$ in $D_{sg}(A^e)$ and then
we are done by the proof of Theorem~\ref{theorem-sing}.

Let $\eta: 1_{\mathcal{D}(A)}\rightarrow Y\otimes_C^LX\otimes _A^L-$ be the unit of the adjoint pair
$$\xymatrix{ \mathcal{D}(A)\ar@<+1ex>[rr]^{X \otimes _A^L-}
&&\mathcal{D}(C)\ar@<+1ex>[ll]^{Y \otimes _C^L-}}.$$
Since these functors induce a singular equivalence, we have that
$\eta _A \otimes _A Z$ is an isomorphism in $D_{sg}(A)$,
for any $Z\in \mathcal{D}^b(A)$. If $A/\rad(A)$ is separable over $k$, then
it follows from \cite[Lemma 3.5]{Dal20} that
the mapping cone of $\eta _A$ is a perfect complex of $A$-$A$-bimodules,
and thus $A\cong Y\otimes_C^LX$ in $D_{sg}(A^e)$.

If $B/\rad(B)$ is separable over $k$, then the condition $\gl B<\infty$
implies that $B\in K^b(\proj B^e)$, see \cite[Lemma 7.2]{Rou08}. Hence $U\otimes _B^LV\cong U\otimes _B^LB\otimes_B^LV
\in K^b(\proj A^e)$, because the functors $-\otimes_B^LV: \mathcal{D}(B^e)\rightarrow \mathcal{D}(B\otimes _k A^{op})$
and $U\otimes _B^L- :\mathcal{D}(B\otimes _k A^{op})\rightarrow \mathcal{D}(A^e)$
restrict to $K^b(\proj )$. On the other hand, it follows from \cite[Theorem 1]{Han14} that
there is a recollement
$$\xymatrix @R=0.6in @C=0.8in{
\mathcal{D}(B\otimes _kA^{op})
\ar[r]|{U\otimes _B^L} & \mathcal{D}(A^e) \ar@<+3ex>[l]|{V\otimes _A^L}
\ar@<-3ex>[l]\ar[r]|{X\otimes _A^L-} & \mathcal{D}(C\otimes _kA^{op})
\ar@<+3ex>[l]|{Y\otimes _C^L-} \ar@<-3ex>[l] }.$$
Consequently, we have a triangle $U\otimes _BV\rightarrow A \rightarrow Y\otimes_C^LX\rightarrow$ in $\mathcal{D}(A^e)$.
Using the fact $U\otimes _BV\in K^b(\proj A^e)$, we obtain that $A\cong Y\otimes_C^LX$ in $D_{sg}(A^e)$.

\end{proof}

\begin{corollary}\label{cor-recoll}
Let $A$, $B$ and $C$ be finite dimensional $k$-algebras such that
$B$ has finite projective dimension as a $B$-$B$-bimodule. Assume $\mathcal{D}A$ admit a $2$-recollement relative to $\mathcal{D}B$
and $\mathcal{D}C$. Then $A$ and $C$ are singularly equivalent of Morita type with level.
\end{corollary}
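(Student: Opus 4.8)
The plan is to re-run the proof of Proposition~\ref{prop-recoll}, checking that the single hypothesis $\pd _{B^e}B<\infty$ (equivalently $B\in K^b(\proj B^e)$) supplies both ingredients that were extracted there from ``$\gl B<\infty$ together with separability of $B/\rad(B)$'': namely $\gl B<\infty$ itself, and the membership $B\in K^b(\proj B^e)$, which is the only role separability played (it entered there solely through Rouquier's Lemma \cite[Lemma 7.2]{Rou08}). For the first of these I would record the implication $\pd _{B^e}B<\infty\Rightarrow\gl B<\infty$: since $B^e$ is free as a right $B$-module, a projective $B^e$-module is flat as a right $B$-module and $B^e\otimes _BM$ is a free left $B$-module for every $M\in\mod B$, so tensoring a finite projective $B^e$-resolution of $B$ on the right with $M$ produces a finite projective resolution of $B\otimes_B^LM\cong M$ over $B$; hence $\gl B<\infty$, exactly as assumed in Proposition~\ref{prop-recoll}.

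Next I would copy the opening of the proof of Proposition~\ref{prop-recoll}: pass to a standard $2$-recollement with $\mathcal{D}B$ on the left and $\mathcal{D}C$ on the right; since $i^!$ restricts to $\mathcal{D}^b(\mod )$ by \cite[Lemma 2.9]{AKLY17} and $\gl B<\infty$ gives $i^!(A)\in\mathcal{D}^b(\mod B)=K^b(\proj B)$, the recollement extends one step downwards by \cite[Proposition 3.2]{AKLY17}, producing bimodule complexes $_CX_A$, $_AY_C$, $_AU_B$, $_BV_A$ (perfect over each one-sided factor) with $i_*=U\otimes_B^L-$, $i^!=V\otimes_A^L-$, $j^*=X\otimes_A^L-$, $j_*=Y\otimes_C^L-$ and $Y=\RHom_C(X,C)$. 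By \cite[Proposition 3]{Qin20}, $j^*$ and $j_*$ induce a mutually inverse equivalence between $D_{sg}(A)$ and $D_{sg}(C)$, and since the counit $X\otimes_A^LY\to C$, $x\otimes f\mapsto f(x)$, is a morphism of $C$-$C$-bimodules, one gets $C\cong X\otimes_A^LY$ in $D_{sg}(C^e)$.

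It remains to produce $A\cong Y\otimes_C^LX$ in $D_{sg}(A^e)$, and here I would follow the ``$B/\rad(B)$ separable'' branch of the proof of Proposition~\ref{prop-recoll} but invoke the standing hypothesis $B\in K^b(\proj B^e)$ \emph{directly} instead of deducing it from separability: then $U\otimes_B^LV\cong U\otimes_B^LB\otimes_B^LV\in K^b(\proj A^e)$, and Han's recollement \cite[Theorem 1]{Han14} supplies a triangle $U\otimes_BV\to A\to Y\otimes_C^LX\to$ in $\mathcal{D}(A^e)$, whence $A\cong Y\otimes_C^LX$ in $D_{sg}(A^e)$. With both singularity-category isomorphisms in hand the remainder of the argument is verbatim the tail of the proof of Theorem~\ref{theorem-sing} (from the two $D_{sg}$-isomorphisms onward), which uses only \cite[Proposition 4.4]{Dal20} and the results of \cite{Wei17} and involves no separability, and it yields the required pair of bimodules defining a singular equivalence of Morita type with level between $A$ and $C$. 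The only delicate point is the very first step: $\gl B<\infty$ does \emph{not} in general force $\pd _{B^e}B<\infty$ (an inseparable field extension $B=K\supseteq k$ is the standard counterexample, which is precisely why Proposition~\ref{prop-recoll} carries a separability hypothesis), so the actual content here lies in the one-directional implication $\pd _{B^e}B<\infty\Rightarrow\gl B<\infty$.
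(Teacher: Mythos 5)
Your proposal is correct and follows the same route as the paper: deduce $\gl B<\infty$ from $\pd_{B^e}B<\infty$ (the paper cites Cartan--Eilenberg, Chapter~IX, Proposition~7.6, while you give the standard explicit argument by tensoring a finite $B^e$-projective resolution of $B$ with a left $B$-module, using that projective $B^e$-modules are free, hence flat, as right $B$-modules and that $B^e\otimes_BM\cong B\otimes_kM$ is $B$-free), note that the hypothesis already gives $B\in K^b(\proj B^e)$ directly, and then run the ``$B\in K^b(\proj B^e)$'' branch of the proof of Proposition~\ref{prop-recoll} without invoking separability. Your closing remark that the implication is strictly one-directional (inseparable field extensions being the standard obstruction) correctly explains why this corollary replaces the separability hypothesis of Proposition~\ref{prop-recoll} rather than following from it.
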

\begin{proof}
From \cite[Chapter IX, Propositions 7.6]{CE56}, we have that $\gl B<\infty$.
Now note that $B\in K^b(\proj B^e)$, the statement can be proved in the same way as Proposition~\ref{prop-recoll}.
\end{proof}

\begin{corollary}{\rm (Compare \cite[Section 3]{Wang15})}\label{cor-tri-alg}
Let $A=\left(
\begin{array}{cccc}
B& 0  \\
_CM_B& C \\
\end{array}
\right)$, where $B, C$ are finite dimensional $k$-algebras and $M$ a finitely generated $C$-$B$-bimodules.
Then following statements hold:

{\rm (1)} If $B$ has finite projective dimension as a $B$-$B$-bimodule,
then $A$ and $C$ are singularly equivalent of Morita type with level;

{\rm (2)} If $C$ has finite projective dimension as a $C$-$C$-bimodule,
then $A$ and $B$ are singularly equivalent of Morita type with level.

\end{corollary}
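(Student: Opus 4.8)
The plan is to attach to the triangular matrix algebra $A$ two recollements coming from its two diagonal idempotents, to upgrade each of them to a $2$-recollement, and then to invoke Corollary~\ref{cor-recoll}. Let $e_{1},e_{2}\in A$ be the idempotents supported on the $B$-corner and the $C$-corner respectively, so that $e_{1}Ae_{1}\cong B$, $A/Ae_{1}A\cong C$, $e_{2}Ae_{2}\cong C$ and $A/Ae_{2}A\cong B$. First I would record the following elementary consequences of the triangular shape of $A$: $Ae_{2}$ is free of rank one as a right $e_{2}Ae_{2}$-module, $e_{1}A$ is free of rank one as a left $e_{1}Ae_{1}$-module, $A/Ae_{2}A$ is finitely generated projective as a right $A$-module (it is the direct summand $e_{1}A$ of $A$, since $Ae_{2}A=e_{2}A$), and $A/Ae_{1}A$ is finitely generated projective as a left $A$-module (it is the direct summand $Ae_{2}$ of $A$, since $Ae_{1}A=Ae_{1}$). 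From these facts one deduces that $Ae_{1}A$ and $Ae_{2}A$ are stratifying ideals, whence there are recollements
\[
\mathcal{D}(B)\longrightarrow\mathcal{D}(A)\longrightarrow\mathcal{D}(C)
\quad\text{and}\quad
\mathcal{D}(C)\longrightarrow\mathcal{D}(A)\longrightarrow\mathcal{D}(B),
\]
the first coming from $e_{2}$ (with $\mathcal{D}(B)=\mathcal{D}(A/Ae_{2}A)$ fully faithfully embedded) and the second from $e_{1}$.

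Next I would upgrade these to $2$-recollements, treating the two idempotents asymmetrically. In the recollement attached to $e_{2}$, the two functors in its lowest layer are $(A/Ae_{2}A)\otimes^{L}_{A}-\colon\mathcal{D}(A)\to\mathcal{D}(B)$ and $Ae_{2}\otimes^{L}_{C}-\colon\mathcal{D}(C)\to\mathcal{D}(A)$; since $A/Ae_{2}A$ is finitely generated projective as a right $A$-module and $Ae_{2}$ is finitely generated projective as a right $C$-module, both of these functors preserve products and hence admit left adjoints, so by the standard ladder-extension machinery (as used in the proof of Proposition~\ref{prop-recoll}, drawing on \cite{AKLY17, QH16}) the recollement extends one step downwards to a $2$-recollement of $\mathcal{D}(A)$ relative to $\mathcal{D}(B)$ and $\mathcal{D}(C)$. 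Dually, in the recollement attached to $e_{1}$, the two functors in its top layer are $\RHom_{A}(A/Ae_{1}A,-)$ and $\RHom_{B}(e_{1}A,-)$; since $A/Ae_{1}A$ is finitely generated projective as a left $A$-module and $e_{1}A$ is finitely generated projective as a left $e_{1}Ae_{1}$-module, both preserve coproducts and hence admit right adjoints, so the recollement extends one step upwards to a $2$-recollement of $\mathcal{D}(A)$ relative to $\mathcal{D}(C)$ and $\mathcal{D}(B)$.

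With these two $2$-recollements in hand, Corollary~\ref{cor-recoll} completes the proof: for (1), the hypothesis that $B$ has finite projective dimension as a $B$-$B$-bimodule together with the $2$-recollement of $\mathcal{D}(A)$ relative to $\mathcal{D}(B)$ and $\mathcal{D}(C)$ yields that $A$ and $C$ are singularly equivalent of Morita type with level; for (2), the hypothesis on $C$ together with the $2$-recollement relative to $\mathcal{D}(C)$ and $\mathcal{D}(B)$ yields, by Corollary~\ref{cor-recoll} with the roles of $B$ and $C$ interchanged, that $A$ and $B$ are singularly equivalent of Morita type with level (alternatively, (2) is obtained by applying (1) to the opposite algebra $A^{\op}$, which is again triangular with corners $C^{\op}$ and $B^{\op}$). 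The step I expect to be the main obstacle is the upgrading to $2$-recollements --- verifying that the relevant derived tensor and $\RHom$ functors preserve products, resp.\ coproducts, which comes down to the one-sided projectivities recorded in the first paragraph. What makes the two cases work out is that these projectivities hold for any finitely generated bimodule $M$ (no homological condition on $M$ is required), but that the direction in which each recollement can be extended is forced --- downwards for $e_{2}$, upwards for $e_{1}$ --- so that extending in the wrong direction would instead require $M$ to have finite projective dimension on one side; it is exactly this choice of direction that aligns the two halves of the statement with the two hypotheses of Corollary~\ref{cor-recoll}.
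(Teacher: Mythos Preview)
Your argument for (2) is correct and is essentially the paper's proof: the triangular structure alone yields a $2$-recollement of $\mathcal{D}A$ relative to $\mathcal{D}C$ and $\mathcal{D}B$ (this is \cite[Example~3.4]{AKLY17}), and Corollary~\ref{cor-recoll} applies directly.

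For (1) there is a genuine gap. The functors $(A/Ae_2A)\otimes_A^L-$ and $Ae_2\otimes_{C}^L-$ are $i^*$ and $j_!$, the \emph{top} (leftmost-adjoint) layer of the $e_2$-recollement, not the lowest. Adjoining left adjoints to them does extend the ladder by one step, but in the resulting four-layer diagram the uppermost three layers form a recollement whose fully faithful middle functor is $j_!\colon\mathcal{D}C\to\mathcal{D}A$; hence what you obtain is a $2$-recollement relative to $\mathcal{D}C$ and $\mathcal{D}B$, not to $\mathcal{D}B$ and $\mathcal{D}C$---the roles swap. In other words, both of your constructions produce the \emph{same} standard $2$-recollement, and via Corollary~\ref{cor-recoll} it yields only (2). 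To put $\mathcal{D}B$ in the first slot one must extend the ladder one further step, which requires $\pd M_B<\infty$ (so that $Ae_1\otimes_B^L-$ admits a left adjoint) or $\pd_C M<\infty$ (so that $e_2A\otimes_A^L-$ restricts to $K^b(\proj)$); neither holds unconditionally. The paper supplies exactly this missing ingredient: the hypothesis $\pd_{B^e}B<\infty$ forces $\gl B<\infty$, hence $\pd M_B<\infty$, and only then can one extend to a $2$-recollement relative to $\mathcal{D}B$ and $\mathcal{D}C$. Alternatively, having established (2), one may deduce (1) by applying it to $A^{\op}$---but your direct argument for (1) does not go through as written.
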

\begin{proof}
(1) By \cite[Example 3.4]{AKLY17}, $\mathcal{D}A$ admits a $2$-recollement relative to $\mathcal{D}C$
and $\mathcal{D}B$. Moreover, $B\in K^b(\proj B^e)$ implies that $\gl B<\infty$,
and then $\pd M_B<\infty$. Therefore, this $2$-recollement can be extended one step
upwards, that is, $\mathcal{D}A$ admits a $2$-recollement relative to $\mathcal{D}B$
and $\mathcal{D}C$. Now the statement follows from Corollary~\ref{cor-recoll}.

(2) It follows from Corollary~\ref{cor-recoll}.

\end{proof}

Recall that an algebra $A$ is said to {\it satisfy the Fg condition} if the
Hochschild cohomology ring $HH^*(A)$ is a Noetherian ring,
and the Yoneda algebra
$\Ext _A^*(A/ \rad A, A/ \rad A)$ is a finitely generated
$HH^*(A)$-module. The following corollary generalizes \cite[Theorem 5]{Qin20}
on the assumption of separability.
\begin{corollary}{\rm (Compare \cite[Theorem 5]{Qin20})}\label{cor-fg}
Let $A$, $B$ and $C$ be finite dimensional $k$-algebras such that either
$A/\rad(A)$ or $B/\rad(B)$ is separable over $k$. If
$(\mathcal{D}B, \mathcal{D}A, \mathcal{D}C, i^*, i_*, i^!, j_!, j^*, j_*)$
is a recollement
and $j^*$ is an eventually homological isomorphism,
then $A$ satisfies the Fg condition if and only if so does $C$.
\end{corollary}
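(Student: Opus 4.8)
The plan is to reduce the statement to a known invariance result for the Fg condition under singular equivalences of Morita type with level, using Theorem~\ref{theorem-sing}. The key is that the hypothesis ``$j^*$ is an eventually homological isomorphism'' is essentially designed to guarantee that $j^* : D_{sg}(A) \to D_{sg}(C)$ is a triangle equivalence, which provides the link to the singularity categories. First I would recall (or cite the relevant fact, presumably from \cite{Qin20} or \cite{PSS14, AKLY17}) that if $j^*$ is an eventually homological isomorphism, then $j^*$ sends $K^b(\proj A)$ into $\mathcal{D}^b(\mod C)$ up to a shift, and more precisely that the induced functor on singularity categories $\bar{j}^* : D_{sg}(A) \to D_{sg}(C)$ is an equivalence. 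This is the analogue of \cite[Proposition 3]{Qin20} used in the proof of Proposition~\ref{prop-recoll}.

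Next, I would realize $j^*$ as a tensor functor with a bimodule complex. In a recollement of derived module categories, the functor $j^*$ is given by $X \otimes_A^L -$ for some complex $_CX_A$ which is perfect over $A$ (being the image of a compact generator, it must be compact, hence perfect over $A$; and since $j_! , j_*$ exist, one also controls perfectness over $C$). Its right adjoint $j_* = \RHom_A(X, -) \cong \RHom_C(X,C) \otimes_C^L - $ (using compactness), so $\RHom_A(X,A)$-type conditions translate into the perfectness of $Y = \RHom_C(X,C)$ as required in Theorem~\ref{theorem-sing} (with the roles of $A,B$ there played by $C,A$ here). One then checks the hypotheses of Theorem~\ref{theorem-sing}: separability holds by assumption (on $A$ or $B$ — here one must be slightly careful, since $B$ sits in the ``kernel'' position; but $\gl$-type or perfectness arguments of the kind in Proposition~\ref{prop-recoll}, passing through the recollement $\mathcal{D}(B\otimes_k A^{\mathrm{op}}) \to \mathcal{D}(A^e) \to \mathcal{D}(C \otimes_k A^{\mathrm{op}})$, let one transfer separability of $B/\rad B$ to the statement $A \cong Y \otimes_C^L X$ in $D_{sg}(A^e)$, exactly as in the last paragraph of that proof). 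Having verified the hypotheses, Theorem~\ref{theorem-sing} yields an $C$-$A$-bimodule $M$ and an $A$-$C$-bimodule $N$ such that $(M,N)$ defines a singular equivalence of Morita type with level between $C$ and $A$.

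Finally, I would invoke the invariance of the Fg condition under singular equivalences of Morita type with level. This is recorded in the Remark following the definition of such equivalences, citing \cite{Ska16}: the Fg condition is preserved. Hence $A$ satisfies Fg if and only if $C$ does, completing the proof.

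The main obstacle, as in Proposition~\ref{prop-recoll}, will be establishing the bimodule-level isomorphism $A \cong Y \otimes_C^L X$ in $D_{sg}(A^e)$ (rather than just at the level of one-sided modules or in $\mathcal{D}(A)$). The separability hypothesis is exactly what makes \cite[Lemma 3.5]{Dal20} applicable to the unit $\eta_A$ when $A/\rad A$ is separable; when only $B/\rad B$ is separable, one needs the auxiliary recollement of enveloping algebras coming from \cite{Han14} together with the fact that $\gl B < \infty$ forces $B \in K^b(\proj B^e)$ — but here $\gl B < \infty$ is \emph{not} assumed, so one must instead extract finiteness from the eventually-homological-isomorphism hypothesis (which forces the kernel term of the recollement to be ``small'' in the relevant sense), or else restrict attention to the case where the separable algebra is $A/\rad A$. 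Resolving this case distinction cleanly is where the real work lies; the rest is a formal application of Theorem~\ref{theorem-sing} and the cited invariance of Fg.
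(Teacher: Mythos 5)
Your overall strategy matches the paper's: realize $j^*$ as a tensor functor, use Proposition~\ref{prop-recoll}/Theorem~\ref{theorem-sing} to produce a singular equivalence of Morita type with level between $A$ and $C$, and then invoke invariance of Fg. However, there are two points where your proposal falls short of a proof.

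First, the case distinction you flag as ``where the real work lies'' is resolved in the paper in a single stroke: since $j^*$ is an eventually homological isomorphism, \cite[Theorem 1]{Qin20} gives $\gl B<\infty$ and that $i_*$ restricts to $K^b(\proj)$ and $K^b(\inj)$, so \cite[Proposition 3.2]{AKLY17} extends the given recollement one step downwards and Proposition~\ref{prop-recoll} (with its own separability hypothesis on $A$ or $B$) applies directly. You gesture at this (``extract finiteness from the eventually-homological-isomorphism hypothesis'') but do not close the loop; the missing citation is \cite[Theorem 1]{Qin20}.

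Second, and more seriously, your final step is a genuine gap. You lean on the Remark's loose statement that Fg is preserved under singular equivalences of Morita type with level, but \cite[Theorem 7.4]{Ska16} --- the result actually needed --- only yields the transfer of Fg under the additional hypothesis that both algebras are Gorenstein. The paper fills this by observing that if one of $A$ or $C$ satisfies Fg then it is Gorenstein by \cite[Theorem 1.5 (a)]{EHS04}, and then by \cite[Theorem 3]{Qin20} the eventually-homological-isomorphism hypothesis forces both $A$ and $C$ to be Gorenstein, after which \cite[Theorem 7.4]{Ska16} applies. Without this Gorenstein bridge your argument does not go through; a singular equivalence of Morita type with level alone is not known to preserve Fg in general.
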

\begin{proof}
Since $j^*$ is an eventually homological isomorphism,
it follows from \cite[Theorem 1]{Qin20} that $\gl B<\infty$, and
the functor $i_*$ restricts to both $K^b(\proj)$ and $K^b(\inj)$.
By \cite[Proposition 3.2]{AKLY17}, this recollement can be extended one downwards,
and by Proposition~\ref{prop-recoll}, $A$ and $C$ are singularly equivalent of Morita type with level.
Now assume either $A$ or $C$ satisfies Fg. Then, it follows from
\cite[Theorem 1.5 (a)]{EHS04} that either $A$ or $C$ is Gorenstein, and by \cite[Theorem 3]{Qin20},
both $A$ and $C$ are Gorenstein. Now we finish our proof by \cite[Theorem 7.4]{Ska16}.
\end{proof}

In \cite{Chen14}, the author proved that a certain homological epimorphism between two
algebras induces a triangle equivalence between their singularity categories.
Now we will show that this equivalence is a
singular equivalence of Morita type with level.

\begin{theorem}{\rm (Compare \cite[Theorem]{Chen14})}\label{thm-homo-epic}
Let $A$ be a finite dimensional $k$-algebra and let $J\subseteq A$ be a homological
ideal which has finite projective dimension as an $A$-$A$-bimodule. Then
$A$ and $A/J$ are
singularly equivalent of Morita type with level.
\end{theorem}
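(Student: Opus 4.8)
The plan is to deduce Theorem~\ref{thm-homo-epic} directly from Theorem~\ref{theorem-sing} by taking $B=A/J$ and choosing $X$ to be the $A$-$B$-bimodule $A/J$ itself (viewed as a complex concentrated in degree $0$). First I would verify the hypotheses of Theorem~\ref{theorem-sing} one by one. Separability of the semisimple quotients: since $A/\rad(A)$ is separable by assumption, and $(A/J)/\rad(A/J)$ is a quotient of $A/\rad(A)$ (because $J\subseteq\rad(A)$, which follows from $\pd_{A^e}J<\infty$ forcing $J$ to be contained in the radical — actually one should argue this carefully, or simply note that a homological ideal of finite projective dimension lies in the radical since otherwise $A/J$ would be a proper direct factor contradicting the finite projective dimension of $J$ over $A^e$), the quotient $(A/J)/\rad(A/J)$ is again separable, being a semisimple quotient of a separable algebra. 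Next, $X=A/J$ is perfect as a left $A$-module and as a right $A$-module: this is exactly the statement that $A/J$ has finite projective dimension on each side, which follows from $\pd_{A^e}(A/J)<\infty$ — here I would use that $J$ has finite projective dimension over $A^e$, hence so does $A/J$ via the short exact sequence $0\to J\to A\to A/J\to 0$, and then restrict scalars along $A\to A^e$ (on either side) noting that $A^e$ is projective as a one-sided $A$-module.

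The condition that $\RHom_A(X,A)$ is perfect as a left $B$-module: since $X={}_A(A/J)$ is perfect over $A$, $\RHom_A(A/J,A)$ is a perfect complex of right $A$-modules a priori, but I must check it is perfect over $B=A/J$ acting on the left. Here the homological-epimorphism hypothesis enters: $A\to A/J$ being a homological epimorphism means $(A/J)\otimes_A^L(A/J)\cong A/J$ and, dually, $\RHom_A(A/J,A/J)\cong A/J$; combined with $\pd_{A^e}(A/J)<\infty$ one shows $\RHom_A(A/J,A)$, which is a complex of $A/J$-$A$-bimodules, is perfect over $A/J$ on the left. The cleanest route is probably to invoke Chen's theorem (\cite{Chen14}) directly for the last remaining hypothesis: the tensor functor $(A/J)\otimes_A^L-\colon D_{sg}(A)\to D_{sg}(A/J)$ is a triangle equivalence — wait, the direction needed in Theorem~\ref{theorem-sing} is $X\otimes_B^L-\colon D_{sg}(B)\to D_{sg}(A)$, so I would set this up with $A$ and $B=A/J$ in the roles demanded by Theorem~\ref{theorem-sing}, i.e. take the $A$-$B$-bimodule to be $A/J$ with $B$ acting on the right, and observe that $(A/J)\otimes_{A/J}^L-\colon D_{sg}(A/J)\to D_{sg}(A)$ agrees with the inflation/restriction functor, which Chen proved to be an equivalence precisely under the hypothesis that $J$ is homological of finite projective dimension over $A^e$.

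With all four hypotheses of Theorem~\ref{theorem-sing} checked, its conclusion gives an $A$-$(A/J)$-bimodule $M$ and an $(A/J)$-$A$-bimodule $N$ such that $(M,N)$ defines a singular equivalence of Morita type with level, which is exactly the assertion of Theorem~\ref{thm-homo-epic}. The main obstacle, I expect, is the bookkeeping around the perfectness of $\RHom_A(A/J,A)$ as a \emph{left} $A/J$-module: one-sided perfectness over $A$ is immediate, but transferring it to the $A/J$-side genuinely uses the homological-epimorphism property (not just finite bimodule projective dimension), and it is the point where one must be careful not to conflate the two ring actions. A secondary, more routine point is confirming $J\subseteq\rad(A)$ so that the separability of the quotient is inherited; this is standard but should be stated. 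Everything else is a direct application of the already-proven Theorem~\ref{theorem-sing} together with \cite{Chen14}.
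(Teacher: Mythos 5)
The central problem is that Theorem~\ref{theorem-sing} carries a separability hypothesis on both semisimple quotients, while Theorem~\ref{thm-homo-epic} assumes no separability at all. You begin your verification with ``since $A/\rad(A)$ is separable by assumption,'' but this is not an assumption of the theorem you are proving. You would therefore only be proving a weaker statement. The paper's own proof is designed precisely to avoid this: rather than invoking Theorem~\ref{theorem-sing} as a black box, it takes the adjoint pair $A/J\otimes_A^L- \dashv A/J\otimes_{A/J}^L-$ and obtains the two required bimodule isomorphisms directly --- the counit $\epsilon_{A/J}\colon A/J\otimes_A^L A/J\to A/J$ is an isomorphism in $\mathcal{D}((A/J)^e)$ by Geigle--Lenzing \cite[Theorem 4.4(1)]{GL91} (this is essentially the definition of homological epimorphism, no separability needed), and the unit $\eta_A\colon A\to A/J$ is an isomorphism in $D_{sg}(A^e)$ because its cone is $J[1]\in K^b(\proj A^e)$. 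Separability enters the proof of Theorem~\ref{theorem-sing} only through Dalezios's \cite[Theorem 3.6]{Dal20}, which is used to upgrade a one-sided equivalence on singularity categories to isomorphisms of bimodule complexes in $D_{sg}(A^e)$ and $D_{sg}(B^e)$; here that upgrade is free, so the hypothesis can be dropped. The final sentence ``by the same way as we did in Theorem~\ref{theorem-sing}'' refers only to the syzygy/truncation argument at the end of that proof (using \cite[Proposition 4.4]{Dal20} and \cite[Proposition 3.8]{Wei17}), which needs no separability.

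Two secondary points. First, your claim that $\pd_{A^e}J<\infty$ forces $J\subseteq\rad(A)$ is false, and the heuristic offered for it (``otherwise $A/J$ would be a proper direct factor contradicting the finite projective dimension of $J$ over $A^e$'') does not hold: take $A=k\times k$ and $J=k\times 0$, which is a homological ideal with $\pd_{A^e}J=0$ yet $J\not\subseteq\rad(A)=0$. (Fortunately the digression is unnecessary: $\rad(A/J)=(\rad(A)+J)/J$ always, so $(A/J)/\rad(A/J)\cong A/(\rad(A)+J)$ is a quotient of $A/\rad(A)$ with or without $J\subseteq\rad(A)$.) Second, with the role assignment you chose ($B=A/J$, $X={}_A(A/J)_{A/J}$), the hypothesis that $\RHom_A(X,A)=\RHom_A(A/J,A)$ be perfect as a left $A/J$-module is genuinely problematic, as you yourself flag; this complex is $i^!(A)$ in the associated recollement and there is no obvious reason for it to lie in $K^b(\proj A/J)$. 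If you insist on invoking Theorem~\ref{theorem-sing} directly, the cleaner role assignment is to take the algebra ``$A$'' of Theorem~\ref{theorem-sing} to be $A/J$, ``$B$'' to be $A$, and $X={}_{A/J}(A/J)_A$: then $X\otimes_A^L-=A/J\otimes_A^L-$ is Chen's equivalence, $X$ is trivially perfect over $A/J$ and perfect over $A$ (since $\pd(A/J)_A<\infty$), and $\RHom_{A/J}(A/J,A/J)=A/J$ is perfect as a left $A$-module because $\pd_A(A/J)<\infty$ --- the obstacle you flagged disappears entirely. But even then the separability gap remains, so a direct application of Theorem~\ref{theorem-sing} cannot recover Theorem~\ref{thm-homo-epic} in the generality stated.
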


\begin{proof}
consider the adjoint pair $\xymatrix{ \mathcal{D}(A)\ar@<+1ex>[rr]^{A/J \otimes _A^L-}
&&\mathcal{D}(A/J)\ar@<+1ex>[ll]^{A/J \otimes _{A/J} ^L-}}$ with the unit $\eta$ and the counit $\epsilon$.
Since $J$, as an $A$-$A$-bimodule, has finite
projective dimension, so it has finite projective dimension both as a left and right $A$-module.
Therefore, $\pd _A(A/J)<\infty$ and $\pd (A/J)_A<\infty$, and these adjoint functors restrict to
one at the level of singularity categories. By \cite[Theorem 4.4 (1)]{GL91},
the counit $\epsilon _{A/J}:A/J \otimes _A^L A/J\rightarrow A/J$ is an isomorphism in
$\mathcal{D}((A/J)^e)$ and thus $A/J\cong A/J \otimes _A^L A/J$ in $D_{sg}((A/J)^e)$.
Moreover, the unit map $\eta _{A}:A\rightarrow  A/J$ is an isomorphism in
$D_{sg}(A^e)$ since $\pd _{A^e}(J)<\infty$.
As a consequence, we obtain that $A$ and $A/J$
are singularly equivalent of Morita type with level by the same way
as we did in Theorem~\ref{theorem-sing}.
\end{proof}

\section{Singular equivalences induced by idempotents}
\indent\indent Let $A$ be a finite-dimensional algebra over a field $k$, and let $e \in A$ be an idempotent.
From \cite{PSS14}, the functor $eA\otimes _A-: \mod A\rightarrow
\mod eAe$ induces a singular equivalence between
$A$ and $eAe$ if and only if $\pd _A(\frac {A/AeA}{\rad (A/AeA)})<\infty$ and $\pd _{eAe}eA<\infty$,
see also \cite[Theorem 2.1]{Chen09}. Similarly,  $Ae\otimes _{eAe}-: \mod eAe\rightarrow
\mod A$ induces a singular equivalence if and only if $\id _A(\frac {A/AeA}{\rad (A/AeA)})<\infty$ and $\pd Ae_{eAe}<\infty$,
see \cite[Theorem II]{Shen20}. In this section, we will show that $A$ and $eAe$ are
singularly equivalent of Morita type with level in these two cases .

\begin{theorem}\label{thm-idem-pd}
Let $A$ be a finite-dimensional algebra over a field $k$ such that
$A/\rad(A)$ is separable over $k$ and let $e \in A$ be an idempotent.
If $\pd _A(\frac {A/AeA}{\rad (A/AeA)})
<\infty$ and $\pd _{eAe}eA<\infty$, then
$A$ and $eAe$ are
singularly equivalent of Morita type with level.
\end{theorem}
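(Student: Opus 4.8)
The plan is to apply Theorem~\ref{theorem-sing} to the ordered pair of algebras $(eAe,A)$ and to the complex $X=eA$, regarded as an $eAe$-$A$-bimodule; a singular equivalence of Morita type with level between $eAe$ and $A$ is the same datum as one between $A$ and $eAe$ (interchange the two bimodules), so this gives the statement. The formal checks come first. Since $\rad(eAe)=e\rad(A)e$, we have $eAe/\rad(eAe)\cong e(A/\rad A)e$, a corner of a separable algebra and hence separable, so both $eAe$ and $A$ have separable semisimple quotient; $eA$ is a direct summand of $A$ as a right $A$-module, hence perfect over $A$; $eA$ is perfect over $eAe$ as a left module because $\pd_{eAe}eA<\infty$; and $eA\otimes^L_A- : D_{sg}(A)\to D_{sg}(eAe)$ is an equivalence by \cite{Chen09, PSS14}, which uses both standing hypotheses. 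The one non-formal point that remains is to show that $Y:=\RHom_{eAe}(eA,eAe)$ is a perfect complex of left $A$-modules.

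For that I would use the standard recollement of derived categories attached to the idempotent $e$ (see e.g. \cite{Chen09, PSS14}), whose relevant layers are
$$\mathcal{D}(A/AeA)\ \xrightarrow{\ i_*\ }\ \mathcal{D}(A)\ \xrightarrow{\ j^*\ }\ \mathcal{D}(eAe),$$
with $j^*=eA\otimes^L_A-$, $j_!=Ae\otimes^L_{eAe}-$ and $j_*=\RHom_{eAe}(eA,-)$. Then $Y=j_*(eAe)$, and $Y\in\mathcal{D}^b(\mod A)$ because $eA$ is perfect over $eAe$, so $Y$ has bounded finite-dimensional cohomology, which carries the evident left $A$-action. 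Full faithfulness of $j_*$ gives $j^*Y\cong eAe$, so after the identifications $j_!j^*Y\cong Ae\otimes^L_{eAe}eAe\cong Ae$ the gluing triangle for $Y$ becomes $Ae\to Y\to i_*i^*Y\to$ in $\mathcal{D}(A)$. Here $Ae$ is perfect over $A$; and $i_*i^*Y$, being the cone of a morphism of objects of $\mathcal{D}^b(\mod A)$, also lies in $\mathcal{D}^b(\mod A)$, while by construction it lies in the image of $i_*$, i.e. it is isomorphic to a bounded complex of finitely generated $A/AeA$-modules. At this point the hypothesis $\pd_A\bigl(\frac{A/AeA}{\rad(A/AeA)}\bigr)<\infty$ enters: by induction along composition series, every finitely generated $A/AeA$-module has finite projective dimension over $A$, so every object of $\mathcal{D}^b(\mod A/AeA)$ is perfect over $A$; in particular $i_*i^*Y$ is. Hence $Y$ sits in a triangle between two perfect $A$-complexes, so $Y$ is perfect over $A$, as needed. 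Theorem~\ref{theorem-sing} then produces an $eAe$-$A$-bimodule $M$ and an $A$-$eAe$-bimodule $N$ with $(M,N)$ a singular equivalence of Morita type with level, and $(N,M)$ witnesses the same for $A$ and $eAe$.

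The main obstacle is exactly this perfectness of $Y=\RHom_{eAe}(eA,eAe)$ over $A$, and the asymmetry of the two hypotheses is what dictates the choice $X=eA$: they constrain $eA$ as a left $eAe$-module but say nothing about $Ae$ as a right $eAe$-module, so the more symmetric-looking choice $X=Ae$ with the pair $(A,eAe)$ is unavailable, since its perfectness over $eAe$ on the right is not guaranteed. It is $\RHom_{eAe}(eA,eAe)$ — not $eA$ or $Ae$ in isolation — that the recollement, together with the projective-dimension hypothesis on $\frac{A/AeA}{\rad(A/AeA)}$, makes tractable.
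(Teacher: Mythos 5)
Your proposal is correct and follows essentially the same strategy as the paper: set up the recollement of derived categories attached to the idempotent $e$, verify the hypotheses of Theorem~\ref{theorem-sing} for the bimodule $eA$, and conclude. Two small points. First, a technical imprecision: the first slot of the recollement is in general $\mathcal{D}(B)$ for a dg algebra $B$, not $\mathcal{D}(A/AeA)$ --- the latter requires $AeA$ to be a stratifying ideal, which is not assumed. This does not break your argument, because the relevant fact (used also by the paper, via Miyachi) is simply that $\Ker j^*$ consists of complexes whose cohomologies are annihilated by $e$, hence lie in $\mod A/AeA$; a bounded such complex with finitely generated cohomology then lies in $\tria(\mod A/AeA)=\tria(\frac{A/AeA}{\rad(A/AeA)})\subseteq K^b(\proj A)$ by dévissage, which is all you need to conclude $i_*i^*Y$ is perfect. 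You should phrase it this way rather than asserting $i_*i^*Y$ is literally a complex of $A/AeA$-modules. Second, a genuine (if minor) streamlining: you establish perfectness of $Y=j_*(eAe)$ by applying the gluing triangle $j_!j^*Y\to Y\to i_*i^*Y$ directly to $Y$, using $j_!j^*Y\cong Ae$; the paper instead applies the other gluing triangle to $A$, shows $j_*j^*A$ perfect, and then invokes a compact-generator argument ($\thick j^*A=\thick(eAe)$, so $\thick j_*j^*A=\thick j_*(eAe)$ by full faithfulness) to deduce $j_*(eAe)$ perfect. Your route skips that Neeman-style step. You also treat Theorem~\ref{theorem-sing} as a black box and cite \cite{Chen09,PSS14} for the singular equivalence $eA\otimes^L_A-$ on singularity categories, whereas the paper re-derives that equivalence from the recollement before running the argument of Theorem~\ref{theorem-sing} once more; both are legitimate.
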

\begin{proof}
Consider the recollement
$$\xymatrix@!=6pc{ \mathcal{D}B \ar[r]|{i_*} & \mathcal{D}A \ar@<-3ex>[l]|{i^*}
\ar@<+3ex>[l]|{i^!} \ar[r]|{j^*=eA\otimes_A^L-} & \mathcal{D}(eAe)
\ar@<-3ex>[l]|{j_!=Ae\otimes _{eAe}^L} \ar@<+3ex>[l]|{j_*}},$$ where $B$ is a dg algebra.
Since $\pd _{eAe}eA<\infty$, it follows from \cite[Lemma 2.5]{AKLY17}
that $j^*$ restricts to $K^b(\proj)$, and then
$j_*$ restricts to $\mathcal{D}^b(\mod )$ by \cite[Lemma 2.7]{AKLY17}.
Moreover, it follows from \cite[Lemma 2.9 (e)]{AKLY17} that $j^*$ restricts to $\mathcal{D}^b(\mod )$.
Now we claim that $j_*$ restricts to $K^b(\proj)$,
and then $j^*$ and $j_*$ induce an adjoint pair between the corresponding singularity categories.

Since $j^*$ and $j_*$ restrict to $\mathcal{D}^b(\mod )$, we infer that
$j_*j^*A\in \mathcal{D}^b(\mod A)$, and then $i_*i^!A\in \mathcal{D}^b(\mod A)$ by the triangle
$$i_*i^!A\rightarrow A\rightarrow j_*j^*A\rightarrow.$$
As $i_*i^!A\in \Ker j^*$, it follows that all homologies
of $i_*i^!A$ are in $\mod A/AeA$  (ref. \cite[Proposition
2.17]{Miy03}). Thus $i_*i^!A\in \tria (\mod A/AeA)=\tria (\frac {A/AeA}{\rad (A/AeA)})$,
and then the assumption $\pd _A(\frac {A/AeA}{\rad (A/AeA)})<\infty$ implies that
$i_*i^!A\in K^b(\proj A)$. Applying the triangle given above, we conclude that
$j_*j^*A\in K^b(\proj A)$. For any $X\in \mathcal{D}(eAe)$,
we have $X\cong j^*j_*X\subseteq j^*(\Tria A)\subseteq \Tria j^*A$, that is,
$j^*A$ is a compact generator of $\mathcal{D}(eAe)$. So, it follows that $\thick j^*A=\thick (eAe)$,
and thus $\thick j_*j^*A=\thick j_*(eAe)$ for $j_*$ is a full embedding, cf. \cite[ Lemma 2.2]{Nee92}.
Now note that $j_*j^*A\in K^b(\proj A)$, and so we get $j_*(eAe)\in K^b(\proj A)$, that is,
the functor $j_*$ restricts to $K^b(\proj)$.

Next, we claim that ($j^*$, $j_*$) induces a mutually inverse equivalence
between the corresponding singularity categories, and then
we obtain that $A$ and $eAe$ are
singularly equivalent of Morita type with level
by the same way as we did in
Theorem~\ref{theorem-sing}.
For this, take $X\in \mathcal{D}^b(\mod A)$ and consider the triangle
$$i_*i^!X\rightarrow X\rightarrow j_*j^*X\rightarrow.$$
By a similar argument as above, we can prove that $i_*i^!X \in K^b(\proj A)$.
Therefore,
$X\cong j_*j^*X$ in $D_{sg}(A)$, and for any $Y\in \mathcal{D}^b(\mod eAe)$ the isomorphism
$Y\cong j^*j_*Y$ in $D_{sg}(eAe)$ is clear.
Hence, ($j^*$, $j_*$) induces a mutually inverse equivalence
between the corresponding singularity categories.

\end{proof}

\begin{theorem}\label{thm-idem-id}
Let $A$ be a finite-dimensional algebra over a field $k$ such that
$A/\rad(A)$ is separable over $k$ and let $e \in A$ be an idempotent.
If $\id _A(\frac {A/AeA}{\rad (A/AeA)})<\infty$ and $\pd Ae _{eAe}<\infty$, then
$A$ and $eAe$ are
singularly equivalent of Morita type with level.
\end{theorem}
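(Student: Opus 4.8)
The plan is to deduce Theorem~\ref{thm-idem-id} from Theorem~\ref{thm-idem-pd} by passing to the opposite algebra: a direct analogue of the argument of Theorem~\ref{thm-idem-pd} would ask for $\pd_{eAe}(eA)<\infty$, which is not assumed here, whereas passing to $A^{\op}$ trades this finiteness for $\pd(Ae)_{eAe}<\infty$, which is. The first thing I would record is that a singular equivalence of Morita type with level is stable under $(-)^{\op}$, with the two bimodules swapping their roles: if $(M,N)$ defines one of level $n$ between $C$ and $D$, then, viewing the $C$-$D$-bimodule $M$ as a $D^{\op}$-$C^{\op}$-bimodule and the $D$-$C$-bimodule $N$ as a $C^{\op}$-$D^{\op}$-bimodule, the pair $(N,M)$ defines one of level $n$ between $C^{\op}$ and $D^{\op}$. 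Here the four one-sided projectivity conditions demanded of $(N,M)$ over $(C^{\op},D^{\op})$ are literally the four conditions imposed on $(M,N)$ over $(C,D)$; and the algebra isomorphism $(C^{\op})^e\cong C^e$ interchanging the two tensor factors sends the regular bimodule $C^{\op}$ to $C$, hence induces an equivalence $\underline{\mod}(C^{\op})^e\simeq\underline{\mod}C^e$ compatible with the syzygy functors and carrying $\Omega^n_{(C^{\op})^e}(C^{\op})$ to $\Omega^n_{C^e}(C)$; since the flip $n\otimes m\mapsto m\otimes n$ identifies $N\otimes_{D^{\op}}M$ with $M\otimes_D N$ and $M\otimes_{C^{\op}}N$ with $N\otimes_C M$, the two stable isomorphisms defining $(M,N)$ transport to precisely the two required of $(N,M)$.

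Granting this, it is enough to prove that $A^{\op}$ and $eA^{\op}e$ are singularly equivalent of Morita type with level, and the plan is to apply Theorem~\ref{thm-idem-pd} to $A^{\op}$ with the idempotent $e$; what remains is to translate its hypotheses. First, $\rad(A^{\op})=(\rad A)^{\op}$, and since $S\otimes_k K$ is semisimple exactly when $S^{\op}\otimes_k K$ is, for every field extension $K/k$, separability of $A/\rad(A)$ forces separability of $A^{\op}/\rad(A^{\op})=(A/\rad A)^{\op}$. Second, $A^{\op}eA^{\op}=AeA$ as subsets of $A$, so with $\Lambda:=A/AeA$ we have $A^{\op}/A^{\op}eA^{\op}=\Lambda^{\op}$; writing $D=\Hom_k(-,k)$, finite-dimensionality of $A$ gives $\id_A(\Lambda/\rad\Lambda)=\pd_{A^{\op}}(D(\Lambda/\rad\Lambda))$, and $D(\Lambda/\rad\Lambda)$ is a semisimple left $\Lambda^{\op}$-module having each simple left $\Lambda^{\op}$-module among its summands, so its projective dimension over $A^{\op}$ is finite exactly when that of $\Lambda^{\op}/\rad\Lambda^{\op}$ is; hence $\id_A(\frac{A/AeA}{\rad(A/AeA)})<\infty$ is equivalent to $\pd_{A^{\op}}(\frac{A^{\op}/A^{\op}eA^{\op}}{\rad(A^{\op}/A^{\op}eA^{\op})})<\infty$. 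Third, $eA^{\op}e=(eAe)^{\op}$, and $eA^{\op}$ regarded as a left $eA^{\op}e$-module is precisely $Ae$ regarded as a right $eAe$-module, so $\pd_{eA^{\op}e}(eA^{\op})=\pd(Ae)_{eAe}<\infty$.

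With the hypotheses verified, Theorem~\ref{thm-idem-pd} yields a singular equivalence of Morita type with level between $A^{\op}$ and $eA^{\op}e=(eAe)^{\op}$, and applying the first-paragraph observation (with $C=A^{\op}$, $D=(eAe)^{\op}$, and $(A^{\op})^{\op}=A$) then produces one between $A$ and $eAe$, as required. I expect the only genuinely delicate point to be the first step, that is, making the $(-)^{\op}$-duality of singular equivalences of Morita type with level precise --- keeping the left/right module structures straight through $(C^{\op})^e\cong C^e$ so that both syzygy isomorphisms really transport --- whereas the translation of the two homological hypotheses is routine once one recalls that $D$ exchanges injective dimension over $A$ with projective dimension over $A^{\op}$ (which is where finite-dimensionality of $A$ enters).
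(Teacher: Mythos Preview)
Your proof is correct and takes a genuinely different route from the paper. The paper argues directly: using $\pd Ae_{eAe}<\infty$ it produces a left adjoint $j^\theta=\RHom_{eAe}(Ae,eAe)\otimes_A^L-$ to $j_!=Ae\otimes_{eAe}^L-$, assembles a $2$-recollement, and then verifies (via the assumption $\id_A(\frac{A/AeA}{\rad(A/AeA)})<\infty$, which forces $i_*i^*DA\in K^b(\inj A)$ and hence $j_!$ to preserve $K^b(\inj)$) that $(j^\theta,j_!)$ restricts to an adjoint pair on singularity categories and induces mutually inverse equivalences; finally it invokes the argument of Theorem~\ref{theorem-sing}. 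Your argument sidesteps all of this by observing that the hypotheses of Theorem~\ref{thm-idem-id} for $(A,e)$ are exactly the hypotheses of Theorem~\ref{thm-idem-pd} for $(A^{\op},e)$, and that singular equivalence of Morita type with level is preserved under $(-)^{\op}$.

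What each approach buys: yours is shorter and conceptually cleaner, reducing the theorem to its mirror image rather than reproving it; the only care needed is the bookkeeping for the $(-)^{\op}$ stability of the notion, which you handle correctly (the flip $(C^{\op})^e\cong C^e$ transports syzygies and the tensor-swap identifies $N\otimes_{D^{\op}}M$ with $M\otimes_D N$). The paper's approach, while longer, has the advantage of explicitly identifying the adjoint pair $(j^\theta,j_!)$ realising the singular equivalence at the level of derived categories of $A$ and $eAe$ themselves, information that can be useful downstream (for instance in Corollary~\ref{cor-iedm-inj-gen}).
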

\begin{proof}
Consider the recollement
$$\xymatrix@!=6pc{ \mathcal{D}B \ar[r]|{i_*} & \mathcal{D}A \ar@<-3ex>[l]|{i^*}
\ar@<+3ex>[l]|{i^!} \ar[r]|{j^*=eA\otimes_A^L-} & \mathcal{D}(eAe)
\ar@<-3ex>[l]|{j_!=Ae\otimes _{eAe}^L} \ar@<+3ex>[l]|{j_*}},$$ where $B$ is a dg algebra.
Since $\pd Ae _{eAe}<\infty$, it follows from \cite[Lemma 2.8]{AKLY17}
that $j_!$ has a left adjoint $j^\theta =\RHom _{eAe}(Ae, eAe)\otimes _A^L-$.
By \cite[Lemma 2.2]{AKLY17},
$i^*$ also has a left adjoint $i_\theta$. Therefore, we have a
$2$-recollememt
$$\xymatrix@!=4pc{ \mathcal{D}(eAe) \ar@<+1ex>[r]|{j_!} \ar@<-3ex>[r]|{j_*}  & \mathcal{D}A
\ar@<+1ex>[r]|{i^*}\ar@<-3ex>[r]|{i^!} \ar@<-3ex>[l]|{j^\theta} \ar@<+1ex>[l]|{j^*} &
\mathcal{D}B \ar@<-3ex>[l]|{i_\theta} \ar@<+1ex>[l]|{i_*}}.$$
It follows from \cite[Lemma 2.9 (e)]{AKLY17} that
$j^\theta$ restricts to $K^b(\proj)$, and
$j_!$ restricts to $\mathcal{D}^b(\mod)$ and $K^b(\proj)$.
Now we claim that $j^\theta$ restricts to $\mathcal{D}^b(\mod)$,
and then ($j^\theta$, $j_!$) induces an adjoint pair between the corresponding singularity categories.

Since $j^*$ and $j_!$ restrict to $\mathcal{D}^b(\mod )$, we infer that
$j_!j^*DA\in \mathcal{D}^b(\mod A)$, and then $i_*i^*DA\in \mathcal{D}^b(\mod A)$ by the triangle
$$j_!j^*DA\rightarrow DA\rightarrow i_*i^*DA\rightarrow.$$
As $i_*i^*DA\in \Ker j^*$, it follows that all homologies
of $i_*i^*DA$ are in $\mod A/AeA$. Therefore, $i_*i^*DA\in \tria (\mod A/AeA)=\tria (\frac {A/AeA}{\rad (A/AeA)})$,
and the assumption $\id _A(\frac {A/AeA}{\rad (A/AeA)})<\infty$ implies that
$i_*i^*DA\in K^b(\inj A)$. Applying the triangle given above, we conclude that
$j_!j^*DA\in K^b(\inj A)$. Therefore, $$j_!D(eAe)\cong j_!j^*j_*D(eAe)\subseteq
j_!j^*(\thick DA)\subseteq \thick j_!j^*DA \subseteq K^b(\inj A).$$
Here, the first inclusion follows from the fact that $j_*$ restrict to
$K^b(\inj A)$, see \cite[Lemma 1]{QH16}. As a result,
$j_!$ restricts to $K^b(\inj )$ and then $j^\theta$ restricts to $\mathcal{D}^b(\mod)$ by \cite[Lemma 1]{QH16}.

Next, we claim that ($j^\theta$, $j_!$) induces a mutually inverse equivalences
between the corresponding singularity categories, and then
we obtain that $A$ and $eAe$ are
singularly equivalent of Morita type with level
by the same way as we did in
Theorem~\ref{theorem-sing}.
For this, take $X\in \mathcal{D}^b(\mod A)$ and consider the triangle
$$i_\theta i^*X\rightarrow X\rightarrow j_!j^\theta X\rightarrow.$$
Now we will use \cite[Lemma 2.4 (c)]{AKLY17} to show that $i_\theta i^*X\in K^b(\proj A)$.
For any $Y\in \mathcal{D}^b(\mod A)$ and $n\in \mathbb{Z}$, we have isomorphisms
$$\Hom _{\mathcal{D}A}(i_\theta i^*X, Y[n])\cong \Hom _{\mathcal{D}B}(i^*X, i^*Y[n])
\cong \Hom _{\mathcal{D}A}(X, i_*i^*Y[n]).$$
By a similar argument as above, we can prove that $i_*i^*Y \in K^b(\inj A)$,
and then $\Hom _{\mathcal{D}A}(X, i_*i^*Y[n])=0$ for all but finitely many $n$.
As a result, $\Hom _{\mathcal{D}A}(i_\theta i^*X, Y[n])=0$ for all but finitely many $n$,
and then $i_\theta i^*X\in K^b(\proj A)$ by \cite[Lemma 2.4 (c)]{AKLY17}. Applying the triangle given above,
we infer that $X\cong j_!j^\theta X$ in $D_{sg}(A)$. For any $Z\in \mathcal{D}^b(\mod eAe)$, the isomorphism
$Z\cong j^\theta j_!Z$ in $D_{sg}(eAe)$ is clear.
Therefore, ($j^\theta$, $j_!$) induces a mutually inverse equivalences
between the corresponding singularity categories.
\end{proof}

Next, we will show that the condition $\pd _{eAe}eA<\infty$ (resp. $\pd Ae _{eAe}<\infty$)
in Theorem~\ref{thm-idem-pd} (resp. Theorem~\ref{thm-idem-id}) can be removed if
$\pd _A(\frac {A/AeA}{\rad (A/AeA)})\leq 1$ (resp. $\id _A(\frac {A/AeA}{\rad (A/AeA)})\leq 1$).
\begin{proposition}\label{prop-idem}
Let $A$ be a finite-dimensional algebra over a field $k$ such that
$A/\rad(A)$ is separable over $k$ and let $e \in A$ be an idempotent.
If $\pd _A(\frac {A/AeA}{\rad (A/AeA)})\leq 1$ or $\id _A(\frac {A/AeA}{\rad (A/AeA)})\leq 1$, then
$A$ and $eAe$ are
singularly equivalent of Morita type with level.
\end{proposition}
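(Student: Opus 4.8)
The plan is to reduce the statement to Theorem~\ref{thm-idem-pd} and Theorem~\ref{thm-idem-id}: I will show that the hypothesis $\pd_A\bigl(\tfrac{A/AeA}{\rad(A/AeA)}\bigr)\le 1$ already forces $eA$ to be \emph{projective} as a left $eAe$-module, so in particular $\pd_{eAe}eA=0<\infty$, after which Theorem~\ref{thm-idem-pd} applies verbatim. The case $\id_A\bigl(\tfrac{A/AeA}{\rad(A/AeA)}\bigr)\le 1$ then follows formally by applying this to $A^{\op}$ together with the duality $D=\Hom_k(-,k)$: indeed $(A/AeA)^{\op}=A^{\op}/A^{\op}eA^{\op}$, one has $\pd_{A^{\op}}\bigl(\tfrac{A^{\op}/A^{\op}eA^{\op}}{\rad}\bigr)=\id_A\bigl(\tfrac{A/AeA}{\rad(A/AeA)}\bigr)\le 1$, and $eA^{\op}$, viewed as a left $(eAe)^{\op}$-module, is exactly $Ae$ as a right $eAe$-module; so the first case yields $\pd (Ae)_{eAe}=0<\infty$, and one invokes Theorem~\ref{thm-idem-id} in place of Theorem~\ref{thm-idem-pd}.

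So suppose $\pd_A\bigl(\tfrac{A/AeA}{\rad(A/AeA)}\bigr)\le 1$. Fix a decomposition $1=e_1+\cdots+e_n$ into primitive orthogonal idempotents with $e=e_1+\cdots+e_m$, put $P_i=Ae_i$ and $S_i=P_i/\rad P_i$, and recall that the functor $e(-)=eA\otimes_A-\cong\Hom_A(Ae,-)\colon\mod A\to\mod eAe$ is exact and that $eA=\bigoplus_{i=1}^n eP_i$ as a left $eAe$-module. For $i\le m$ one has $eP_i=(eAe)e_i$, an indecomposable projective $eAe$-module. Moreover the simple $A$-modules $S$ with $eS\ne 0$ are exactly $S_1,\dots,S_m$, so if $eS_i\ne 0$ then $S_i\cong S_j$ for some $j\le m$, whence $P_i\cong P_j$ and $eP_i\cong eP_j=(eAe)e_j$ is projective. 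Finally, the simple $\bar A:=A/AeA$-modules are precisely the $S_i$ with $eS_i=0$, and each such $S_i$ is a direct summand of $\bar A/\rad\bar A$, so $\pd_A S_i\le 1$, i.e.\ $\rad P_i$ is a projective $A$-module, say $\rad P_i\cong\bigoplus_l P_l^{a_{il}}$.

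The heart of the argument is to prove that $eP_i$ is projective over $eAe$ for the remaining indices, i.e.\ those $i$ in the finite set $N=\{i:eS_i=0\}$. Applying the exact functor $e(-)$ to $0\to\rad P_i\to P_i\to S_i\to 0$ and using $eS_i=0$ gives $eP_i\cong\bigoplus_l(eP_l)^{a_{il}}$. I then claim that the ``syzygy quiver'' on $N$, with an arrow $i\to l$ whenever $l\in N$ and $a_{il}\ne 0$, is acyclic: comparing $k$-dimensions in the above short exact sequence gives $\dim_k P_i=\dim_k S_i+\sum_l a_{il}\dim_k P_l\ge 1+\sum_{l\in N}a_{il}\dim_k P_l$, so along any such arrow $\dim_k P_i>\dim_k P_l$, and an oriented cycle would force $\dim_k P_{i_0}>\dim_k P_{i_0}$. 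Since the syzygy quiver on $N$ is finite and acyclic, an induction along a topological ordering of $N$ --- using $eP_i\cong\bigoplus_l(eP_l)^{a_{il}}$, projectivity of $eP_l$ for $l\notin N$, and $eP_l=eS_l=0$ when $S_l$ is projective --- shows $eP_i$ is projective over $eAe$ for every $i\in N$. Hence every $eP_i$ is projective, so $eA=\bigoplus_i eP_i$ is projective over $eAe$; combined with $\pd_A\bigl(\tfrac{A/AeA}{\rad(A/AeA)}\bigr)\le 1<\infty$, Theorem~\ref{thm-idem-pd} then yields the desired singular equivalence of Morita type with level.

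The only genuinely non-formal point is the acyclicity of the syzygy quiver on $N$, which is exactly where the bound ``$\le 1$'' (rather than merely ``$<\infty$'') is used; once acyclicity is in hand, the inductive passage to projectivity of $eA$ over $eAe$ and the reduction to Theorems~\ref{thm-idem-pd} and~\ref{thm-idem-id} are routine. (Alternatively, one could re-run the recollement argument inside the proof of Theorem~\ref{thm-idem-pd} under the weaker hypothesis, but isolating the statement ``$eA$ is projective over $eAe$'' is cleaner, and slightly stronger than what is strictly needed.)
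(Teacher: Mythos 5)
Your proof is correct, and it is genuinely different from the one in the paper. The paper reduces the $\pd\le 1$ hypothesis to the statement that the restriction functor $\mod A/AeA\to\mod A$ is a homological embedding (citing Green--Psaroudakis--Solberg), extracts the induced recollement of derived categories, and then invokes several restriction lemmas from Angeleri H\"ugel--K\"onig--Liu--Yang to deduce that $j^*=eA\otimes_A^L-$ preserves $K^b(\proj)$, hence $eA\in K^b(\proj eAe)$, i.e.\ $\pd_{eAe}eA<\infty$; the dual case is dispatched with ``can be proved dually.'' Your argument, by contrast, is completely elementary and self-contained: it decomposes $eA=\bigoplus_i eP_i$, handles the $i$ with $eS_i\neq 0$ directly, and for the remaining indices $i\in N$ uses the exactness of the Schur functor $e(-)$ together with the hypothesis $\rad P_i$ projective to obtain $eP_i\cong\bigoplus_l (eP_l)^{a_{il}}$, after which a dimension count shows the ``syzygy quiver'' on $N$ is acyclic and an induction closes the argument. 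What your route buys, beyond avoiding the recollement machinery, is the stronger conclusion that $eA$ is actually \emph{projective} over $eAe$ (i.e.\ $\pd_{eAe}eA=0$), rather than merely of finite projective dimension; it also makes transparent exactly where the bound $\le 1$, as opposed to $<\infty$, is used (acyclicity of the syzygy quiver). Your explicit reduction of the $\id\le 1$ case to the $\pd\le 1$ case via $A^{\op}$ and $D=\Hom_k(-,k)$ is also carried out carefully, correctly keeping track of the identifications $eA^{\op}\leftrightarrow Ae$ and the fact that separability of $A/\rad A$ passes to $A^{\op}/\rad A^{\op}$, whereas the paper leaves this to the reader.
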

\begin{proof}
If $\pd _A(\frac {A/AeA}{\rad (A/AeA)})\leq 1$ or
$\id _A(\frac {A/AeA}{\rad (A/AeA)})\leq 1$, then the restriction functor
$\mod A/AeA \rightarrow \mod A$
is a homological embedding (ref. \cite[Proposition 3.5 (iv), Remark 5.9]{GPS18}).
Therefore, there is a recollement of derived categories
$$\xymatrix@!=6pc{ \mathcal{D}(A/AeA) \ar[r]|{i_*} & \mathcal{D}A \ar@<-3ex>[l]|{i^*}
\ar@<+3ex>[l]|{i^!} \ar[r]|{j^*=eA\otimes_A^L-} & \mathcal{D}(eAe)
\ar@<-3ex>[l]|{j_!=Ae\otimes _{eAe}^L} \ar@<+3ex>[l]|{j_*}},$$
where $i_*$
sends all $A/AeA$-modules to $A$-modules. If $\pd _A(\frac {A/AeA}{\rad (A/AeA)})\leq 1$,
then $\pd _A(i_*M)\leq 1$ for any $M\in \mod A/AeA$. Therefore, the functor $i_*$ restricts to
$K^b(\proj )$, and so does $j^*$ (see \cite[Lemma 2.5 and Lemma 4.3]{AKLY17}.
Hence, $j^*A=eA \in K^b(\proj eAe)$ and then $\pd _{eAe}eA<\infty$.
Therefore, the desired result follows from
Theorem~\ref{thm-idem-pd}, and
the case of $\id _A(\frac {A/AeA}{\rad (A/AeA)})\leq 1$ can be proved dually.

\end{proof}

The following example illustrates that our result
can be applied to construct singular equivalence of Morita type with level
where previous method seems too complicated.
\begin{example}{\rm
Let $A$ be the triangular matrix algebra
$\left( \begin{array}{cccc}
k& k  \\
0& k[x]/\langle x^2\rangle \\
\end{array}
\right)$,
where $k$ is an algebraically closed field.
In \cite[Example 7.5]{Ska16}, the author
proved that $A$ and $k[x]/\langle x^2\rangle$
are singularly equivalent of Morita type with level by constructing two explicit
bimodules. Now we will use Theorem~\ref{thm-idem-pd} to give
a brief proof.
Let $e=\left(
\begin{array}{cccc}
0& 0  \\
0& 1 \\
\end{array}
\right)$
be an idempotent. Then it is shown in \cite[Example 5.5]{PSS14}
that $\pd _A(\frac {A/AeA}{\rad (A/AeA)})
<\infty$ and $\pd _{eAe}eA<\infty$. Therefore, it follows from Theorem~\ref{thm-idem-pd} that
$A$ and $eAe=k[x]/\langle x^2\rangle$ are
singularly equivalent of Morita type with level.

}
\end{example}

\section{Syzygy-finite, injectives generation and singular equivalences}
\indent\indent In \cite[Lemma 4.13]{Wang15}, Wang proved that the finiteness of the finitistic dimension
is invariant under singular equivalences of Morita type with level. In this section,
we will show that the properties of syzygy-finite, Igusa-Todorov, injectives generation
and projectives cogeneration
are also preserved under singular equivalences of Morita type with level.
\begin{proposition}\label{prop-syz}
Let $A$ and $B$ be two finite dimensional $k$-algebras
which are singularly equivalent of Morita type with level. Then $A$ is syzygy-finite
(resp. Igusa-Todorov) if and only if $B$ is syzygy-finite
(resp. Igusa-Todorov).
\end{proposition}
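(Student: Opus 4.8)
The plan is to exploit the concrete description of a singular equivalence of Morita type with level: if $(M,N)$ defines such an equivalence of level $n$, then the four one-sided modules are finitely generated projective and $M\otimes_B N\cong\Omega_{A^e}^n(A)$, $N\otimes_A M\cong\Omega_{B^e}^n(B)$ in the respective stable bimodule categories. The key consequence I want is a comparison of \emph{syzygies of modules} on the two sides. Namely, for $X\in\mod B$, the module $M\otimes_B X$ lies in $\mod A$ (since $M_B$ is projective), and I claim that $\Omega_A^{n}\bigl(M\otimes_B\Omega_B^{i}(X)\bigr)$ is, up to projective summands, isomorphic to $M\otimes_B\Omega_B^{n+i}(X)$, and symmetrically $N\otimes_A\Omega_A^{n+i}(Y)\cong\Omega_B^{n}(N\otimes_A\Omega_A^{i}(Y))$ up to projectives. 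This follows because tensoring a projective resolution of $X$ over $B$ with the one-sided projective module $M_B$ produces a complex of projective $A$-modules that is exact except at the ends, so its syzygies compute $M\otimes_B\Omega_B^{j}(X)$ directly; the appearance of $\Omega_{A^e}^n$ is what shifts the index by $n$ when one passes back through $N\otimes_A M$. The upshot is a pair of mutually (almost) inverse maps between high syzygies over $A$ and high syzygies over $B$, given by $M\otimes_B-$ and $N\otimes_A-$, using the stable isomorphisms $M\otimes_B N\otimes_A-\simeq\Omega_{A^e}^n(A)\otimes_A-$ and $N\otimes_A M\otimes_B-\simeq\Omega_{B^e}^n(B)\otimes_B-$.

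With this dictionary in hand the syzygy-finite statement is almost immediate. Suppose $A$ is syzygy-finite, witnessed by $s$ so that the indecomposable $m$-th syzygies over $A$ for $m>s$ form a finite set $\mathcal{F}_A$. Let $Y$ be an arbitrary $B$-module and let $m$ be large, say $m>s+n$. Then $\Omega_B^{m}(Y)$ is a summand of $N\otimes_A\Omega_A^{m-n}\bigl(M\otimes_B\Omega_B^{0}(Y)\bigr)$ up to projective summands (applying the two shift identities above), hence up to projectives is $N\otimes_A Z$ for some $Z\in\add\mathcal{F}_A$. Since $N_A$ is finitely generated projective, $N\otimes_A-$ sends the finite set $\mathcal{F}_A$ to a finite set of $B$-modules, and passing to indecomposable summands and discarding projective summands still leaves finitely many indecomposables. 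Therefore the indecomposable $m$-th syzygies over $B$ for $m$ large lie in a fixed finite set, i.e. $B$ is syzygy-finite. By symmetry (the roles of $A$ and $B$ are interchangeable, with $(N,M)$ defining a level-$n$ singular equivalence in the other direction), the converse holds, proving the first equivalence.

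For the Igusa-Todorov case I run the same machine but track the extra datum: a module $V$ over $A$ and an integer $\ell$ such that every $\ell$-th syzygy $M$ over $A$ sits in a short exact sequence $0\to V_1\to V_0\to M\to 0$ with $V_0,V_1\in\add V$. I claim that $W:=N\otimes_A V$ (or rather $N\otimes_A V$ together with $B$ itself, to absorb projectives) together with a suitably enlarged integer $\ell'$ witnesses the Igusa-Todorov property for $B$. Indeed, given a $B$-module of the form $\Omega_B^{\ell'}(Y)$ with $\ell'$ large, it is, up to projective summands, $N\otimes_A U$ for some $A$-module $U$ that is an $\ell$-th syzygy over $A$; choose the given sequence $0\to V_1\to V_0\to U\to 0$ and apply $N\otimes_A-$, which is exact because $N_A$ is projective, to obtain $0\to N\otimes_A V_1\to N\otimes_A V_0\to N\otimes_A U\to 0$ with the outer terms in $\add(N\otimes_A V)$; then adjust by a split sequence to incorporate the projective summand. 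The main obstacle, and the step deserving the most care in the write-up, is precisely the bookkeeping of projective summands and the exact index shift in the identities $\Omega_A^{n}(M\otimes_B-)\simeq M\otimes_B\Omega_B^{n}(-)$ and $N\otimes_A M\otimes_B-\simeq\Omega_{B^e}^n(B)\otimes_B-$ at the level of \emph{modules} rather than in the stable category, i.e. making sure that the "up to projectives" fudge never forces the witnessing set or the witnessing module to grow unboundedly as the syzygy degree grows; this is controlled because $M,N$ are one-sidedly projective and $\Omega_{A^e}^n(A)$, $\Omega_{B^e}^n(B)$ are fixed bimodules, so only finitely many extra projective indecomposables ever intervene. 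Once that is pinned down, both equivalences follow by the symmetric argument sketched above.
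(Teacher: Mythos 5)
Your proof is correct, but it takes a genuinely different (and more self-contained) route than the paper. The paper's proof is a short functorial argument: it observes that $F = N\otimes_A - $ and $G = M\otimes_B-$ restrict to $\mathcal{D}^b(\mod)$ and $K^b(\proj)$, notes that $F(\mod A)\subseteq \mod B$ is a syzygy-finite (resp.\ Igusa--Todorov) class whenever $B$ is, and then invokes the transport result of Wu--Wei (\cite[Lemma 3.3]{WW20}) to conclude that $GF(\mod A)=\Omega^n(\mod A)$ is syzygy-finite (resp.\ Igusa--Todorov), which is equivalent to $A$ having the property. You instead unfold the mechanism behind that lemma by hand: you prove the module-level identity $\Omega_A^j(M\otimes_B X)\cong M\otimes_B\Omega_B^j(X)$ up to projective summands (valid because $M_B$ is projective, so $M\otimes_B-$ is exact and sends projectives to projectives), combine it with $N\otimes_A M\otimes_B Z\cong\Omega_B^n(Z)$ up to projectives (coming from the stable bimodule isomorphism), and deduce $\Omega_B^m(Y)\cong N\otimes_A\Omega_A^{m-n}(M\otimes_B Y)$ up to projectives, from which both the syzygy-finite and Igusa--Todorov transports follow by applying $N\otimes_A-$ to a finite witness set, respectively to the short exact sequence $0\to V_1\to V_0\to U\to 0$. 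The bookkeeping you flag as delicate is in fact fine: over a finite-dimensional algebra there are only finitely many indecomposable projective modules, and Krull--Schmidt controls which indecomposable summands can appear; for Igusa--Todorov, adding $B$ to the candidate module $N\otimes_A V$ absorbs the stray projective summands in the short exact sequence exactly as you propose. What the paper's route buys is brevity and a clean reference; what your route buys is a proof that does not depend on \cite[Lemma 3.3]{WW20} and makes explicit where the level $n$ and the one-sided projectivity of $M,N$ are each used. (One small stylistic point: your displayed claim $\Omega_A^{n}(M\otimes_B\Omega_B^i(X))\cong M\otimes_B\Omega_B^{n+i}(X)$ is stated with the specific exponent $n$, but you later need, and clearly intend, the general exponent; the identity holds for any exponent and the shift by $n$ coming from $\Omega_{B^e}^n(B)$ is a separate fact, so it is cleaner to keep the two ingredients distinct when writing this up.)
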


\begin{proof}
Assume that $(_AM_B, _BN_A)$ defines a singular equivalence of Morita type of level $n$ between $A$ and $B$.
Consider the functors $F=N\otimes _A-: \mathcal{D}A\rightarrow \mathcal{D}B$ and
$G=M\otimes _B-: \mathcal{D}B\rightarrow \mathcal{D}A$. Since $M$ and $N$ are projective as one-side modules,
it follows that the functors $F$ and $G$ restrict to $\mathcal{D}^b(\mod )$ and $K^b(\proj )$.
Assume that $B$ is syzygy-finite
(resp. Igusa-Todorov). Then $F(\mod A)$ is syzygy-finite
(resp. Igusa-Todorov) since $F(\mod A)\subseteq \mod B$,
and by \cite[Lemma 3.3]{WW20}, $GF(\mod A)=\Omega ^n(\mod A)$ is also syzygy-finite
(resp. Igusa-Todorov). Hence, $A$ is syzygy-finite
(resp. Igusa-Todorov) and the ``only if'' part can be proved parallelly.
\end{proof}

\begin{proposition}\label{prop-inj-gen}
Let $A$ and $B$ be two finite dimensional $k$-algebras
which are singularly equivalent of Morita type with level. Then injectives
generate (resp. projectives cogenerate) for $A$ if and only if injectives generate
(resp. projectives cogenerate) for $B$.
\end{proposition}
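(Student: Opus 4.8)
The plan is to first reduce the two assertions to statements about single objects, and then to transport them along the bimodule functors. Recall that $A$ is a compact generator of $\mathcal{D}A := \mathcal{D}(\Mod A)$, that $\inj A = \add(DA)$, and that $DA$ cogenerates $\mathcal{D}A$, i.e. $\Coloc(DA) = \mathcal{D}A$ (since $\RHom_A(X, DA) = DX$ and $D = \Hom_k(-,k)$ is faithful). Hence, as in \cite{Ric19}, injectives generate for $A$ if and only if $A \in \Tria(DA)$, and projectives cogenerate for $A$ if and only if $DA \in \Coloc A$; for instance, if $A \in \Tria(DA)$ then $\Tria(DA)$ is a localizing subcategory of $\mathcal{D}A$ containing a compact generator, hence equals $\mathcal{D}A$. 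By the symmetry of the hypothesis it therefore suffices to prove that if injectives generate for $B$ then $A \in \Tria(DA)$, and that if projectives cogenerate for $B$ then $DA \in \Coloc A$.

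Let $({}_AM_B, {}_BN_A)$ define the equivalence, of level $n$, and keep the exact functors $F = N\otimes_A-$ and $G = M\otimes_B-$ from the proof of Proposition~\ref{prop-syz}. In addition I will use their right adjoints $F_\rho = \Hom_B(N, -)\colon \mathcal{D}B \to \mathcal{D}A$ and $G_\rho = \Hom_A(M, -)\colon \mathcal{D}A \to \mathcal{D}B$. Since ${}_BN$ and ${}_AM$ are finitely generated projective, $F_\rho$ and $G_\rho$ are exact and preserve coproducts, and being right adjoints of the exact functors $F$ and $G$ they carry injective modules to injective modules; as $\Inj A \subseteq \Tria(DA)$, this yields $F_\rho(\Tria(DB)) \subseteq \Tria(DA)$. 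Dually, since $N_A$ and $M_B$ are finitely generated projective, $F$ and $G$ preserve products and carry projectives to projectives, whence $G(\Coloc B) \subseteq \Coloc A$. The composites of interest are $F_\rho G_\rho \cong \RHom_A(M\otimes_B^L N, -)$ and $GF \cong (M\otimes_B^L N)\otimes_A^L-$, where $M\otimes_B^L N = M\otimes_B N$ agrees, up to projective $A^e$-bimodule summands, with $\Omega^n_{A^e}(A)$.

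Now fix a projective $A^e$-resolution $0 \to \Omega^n_{A^e}(A) \to P_{n-1} \to \cdots \to P_0 \to A \to 0$ and restrict it to left $A$-modules. Applying $\RHom_A(-, A)$ to the obvious brutal-truncation triangle produces a triangle with terms $\RHom_A(\Omega^n_{A^e}(A), A)[-n]$, $A$, and the bounded complex $[\,\Hom_A(P_0, A) \to \cdots \to \Hom_A(P_{n-1}, A)\,]$; applying $-\otimes_A^L DA$ produces a triangle with terms $\Omega^n_{A^e}(A)\otimes_A^L DA$, $DA$, and $[\,P_{n-1}\otimes_A DA \to \cdots \to P_0\otimes_A DA\,]$. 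The crucial observation is that for a projective bimodule $P$ one has $\Hom_A(P, A) \in \add(DA) = \inj A$ and $P\otimes_A DA \in \add(A) = \proj A$ --- indeed, as left $A$-modules, $\Hom_A(A\otimes_k A^{\op}, A) \cong DA \otimes_k (A \text{ viewed over } k)$ and $(A\otimes_k A^{\op})\otimes_A DA \cong A \otimes_k (DA \text{ viewed over } k)$, both finitely generated --- so the bracketed complexes above lie in $K^b(\inj A) \subseteq \Tria(DA)$, respectively $K^b(\proj A) \subseteq \Coloc A$, and the projective-summand discrepancy between $M\otimes_B N$ and $\Omega^n_{A^e}(A)$ is absorbed by the additivity of $\RHom_A(-, A)$ and $-\otimes_A^L DA$. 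Granting this, the two triangles show that $A \in \Tria(DA)$ as soon as $F_\rho G_\rho(A) \in \Tria(DA)$, and that $DA \in \Coloc A$ as soon as $GF(DA) \in \Coloc A$. These last two conditions are immediate: if injectives generate for $B$ then $\Tria(DB) = \mathcal{D}B$, so $G_\rho(A) \in \Tria(DB)$ and hence $F_\rho G_\rho(A) = F_\rho(G_\rho(A)) \in \Tria(DA)$; dually, if projectives cogenerate for $B$ then $\Coloc B = \mathcal{D}B$, so $F(DA) \in \Coloc B$ and hence $GF(DA) = G(F(DA)) \in \Coloc A$. Exchanging the roles of $A$ and $B$ gives the converse implications. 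The step I expect to be the main obstacle is the crucial observation just described --- pinning down $\RHom_A(P, A)$ and $P\otimes_A DA$ for projective bimodules $P$ and checking that the correction terms already lie in $\Tria(DA)$ (respectively $\Coloc A$), which is exactly what makes the $\Omega^n_{A^e}$-twist harmless; the reformulation of the first paragraph and the bookkeeping with $F, G, F_\rho, G_\rho$ are routine (parts of it appear already in \cite{Cum19}).
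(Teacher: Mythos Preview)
Your proof is correct and follows the same overall strategy as the paper's: the same Hom functors (your $F_\rho, G_\rho$ are exactly the paper's $F, G$) for injectives generation, the same tensor functors for projectives cogeneration, and the same reduction to showing $A\in\Tria(DA)$ (resp.\ $DA\in\Coloc A$). The one point of divergence is the key computation: the paper identifies $\Hom_A(\Omega^n_{A^e}(A),A)$ with the cosyzygy $\Omega^{-n}_A(A)$ in $\overline{\mod}A$ by citing \cite[Lemma~4.14]{Wang15} and then argues $\Omega^{-n}_A(A)\in\Tria(DA)\Rightarrow A\in\Tria(DA)$, whereas you extract the needed triangle directly from the $A^e$-projective resolution of $A$ and verify by hand that $\Hom_A(P,A)\in\inj A$ and $P\otimes_A DA\in\proj A$ for projective bimodules $P$. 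Your route is a bit longer but more self-contained, and it has the side benefit of making explicit the dual argument for projectives cogeneration that the paper leaves as ``similarly''.
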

\begin{proof}
Assume that $(_AM_B, _BN_A)$ defines a singular equivalence of Morita type of level $n$ between $A$ and $B$.
Then the functors $F=\Hom _A(M,-): \mathcal{D}A\rightarrow \mathcal{D}B$ and
$G=\Hom _B(N,-): \mathcal{D}B\rightarrow \mathcal{D}A$ preserve bounded complexes of
injectives and set indexed coproducts.
Suppose that injectives generate for $B$. Then $\mathcal{D}B= \Tria DB$, and thus
$GF(A)\in G(\mathcal{D}B)= G(\Tria DB)\subseteq \Tria G(DB)\subseteq \Tria DA$.
On the other hand, $GF(A)=\Hom _B(N,\Hom _A(M,A))\cong \Hom _A(M\otimes _BN,A)
\cong \Hom _A(\Omega ^n_{A^e}(A),A)$, which is isomorphic to $\Hom _A(A, \Omega ^{-n}_{A}(A))$
in $\overline{\mod }A$, see \cite[Lemma 4.14]{Wang15}. Therefore, we conclude that
$GF(A)\cong \Omega ^{-n}_{A}(A)$, up to some direct summands of injective $A$-modules.
Since $GF(A)\in \Tria DA$, we get $\Omega ^{-n}_{A}(A)\in \Tria DA$
and then $A \in \Tria DA$. Thus, $\mathcal{D}A=\Tria A\subseteq \Tria DA$,
and then $\mathcal{D}A=\Tria DA$, that is, injectives generate for $A$.
The ``only if'' part can be proved parallelly.

Clearly, both $N\otimes _A-:\mathcal{D}A\rightarrow \mathcal{D}B$ and
$M\otimes _B-:\mathcal{D}B\rightarrow \mathcal{D}A$ preserve bounded complexes of projective modules.
Further, by \cite[Lemma 2.8]{AKLY17}, these two functors
have left adjoints, and then
they preserve set indexed products. Therefore,
the statement on ``projectives cogenerate'' can be proved similarly.
\end{proof}

The following corollary follows from Theorem~\ref{thm-homo-epic},
Proposition~\ref{prop-syz} and Proposition~\ref{prop-inj-gen}.
\begin{corollary}\label{coro-homo-epic-syz}
Let $A$ be a finite dimensional $k$-algebra and let $J\subseteq A$ be a homological
ideal which has finite projective dimension as an $A$-$A$-bimodule. Then
$A$ has the property of syzygy-finite (resp. Igusa-Todorov, injectives
generation, projectives cogeneration) if and only if so does $A/J$.
\end{corollary}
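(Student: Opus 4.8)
The plan is to deduce Corollary~\ref{coro-homo-epic-syz} by simply chaining together three results already established in the excerpt. Theorem~\ref{thm-homo-epic} tells us that under the stated hypotheses---$A$ a finite dimensional $k$-algebra and $J\subseteq A$ a homological ideal of finite projective dimension as an $A$-$A$-bimodule---the algebras $A$ and $A/J$ are singularly equivalent of Morita type with level. Proposition~\ref{prop-syz} says that syzygy-finiteness and the Igusa-Todorov property are invariant under any singular equivalence of Morita type with level, and Proposition~\ref{prop-inj-gen} says the same for injectives generation and projectives cogeneration. So the proof is a one-line concatenation: apply Theorem~\ref{thm-homo-epic} to produce the equivalence, then invoke the two propositions with $B = A/J$.

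Concretely, the steps I would carry out are: first, fix the data $A$ and $J$ and note that the hypotheses of Theorem~\ref{thm-homo-epic} are precisely what is assumed, so by that theorem there is an $A$-$(A/J)$-bimodule $M$ and an $(A/J)$-$A$-bimodule $N$ such that $(M,N)$ defines a singular equivalence of Morita type with level between $A$ and $A/J$. Second, feed this pair into Proposition~\ref{prop-syz} to conclude $A$ is syzygy-finite (resp. Igusa-Todorov) if and only if $A/J$ is. Third, feed the same pair into Proposition~\ref{prop-inj-gen} to conclude injectives generate (resp. projectives cogenerate) for $A$ if and only if they do for $A/J$. Combining these three biconditionals yields the statement.

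There is essentially no obstacle here---the corollary is a formal consequence, and the comment already inserted before the statement (``The following corollary follows from Theorem~\ref{thm-homo-epic}, Proposition~\ref{prop-syz} and Proposition~\ref{prop-inj-gen}'') signals exactly this. The only thing worth a word of care is making sure the hypothesis ``$J$ has finite projective dimension as an $A$-$A$-bimodule'' is genuinely what Theorem~\ref{thm-homo-epic} needs (it is, verbatim), and that Propositions~\ref{prop-syz} and~\ref{prop-inj-gen} apply to \emph{any} singular equivalence of Morita type with level (they do, with no extra separability or Gorenstein assumption). Hence the proof is immediate.

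\begin{proof}
By Theorem~\ref{thm-homo-epic}, $A$ and $A/J$ are singularly equivalent of Morita type with level. Hence the assertion on syzygy-finiteness and the Igusa-Todorov property follows from Proposition~\ref{prop-syz}, and the assertion on injectives generation and projectives cogeneration follows from Proposition~\ref{prop-inj-gen}.
\end{proof}
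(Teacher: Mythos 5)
Your proof is correct and matches the paper's own argument exactly: the paper explicitly prefaces the corollary with the remark that it follows from Theorem~\ref{thm-homo-epic}, Proposition~\ref{prop-syz} and Proposition~\ref{prop-inj-gen}, which is precisely the chain you carry out. Nothing further is needed.
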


In \cite[Proposition 7.6]{Cum19}, the author investigated the invariance of
injectives generation (resp. projectives cogeneration) under the operation
of vertex removal. Now we mention that \cite[Proposition 7.6]{Cum19} can be completed
as follow if we only consider algebras with the conditions of separability.

\begin{corollary}{\rm (Compare \cite[Proposition 7.6]{Cum19})}\label{cor-iedm-inj-gen}
Let $A$ be a finite-dimensional $k$-algebra such that
$A/\rad(A)$ is separable over $k$. Let $e \in A$ be an idempotent and assume
$Ae\otimes _{eAe}^LeA$ is bounded in
cohomology. If $\pd _A(\frac {A/AeA}{\rad (A/AeA)})
<\infty$ or $\id _A(\frac {A/AeA}{\rad (A/AeA)})
<\infty$, then $A$ has the property of syzygy-finite (resp. Igusa-Todorov, injectives
generation, projectives cogeneration) if and only if so does $eAe$.
\end{corollary}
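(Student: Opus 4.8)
The strategy is to deduce the statement from the machinery already assembled in Sections 3 and 4. The crucial point is that the hypothesis ``$Ae\otimes_{eAe}^L eA$ is bounded in cohomology'' is exactly the condition one needs to upgrade the one-sided finiteness assumptions of Theorem~\ref{thm-idem-pd} and Theorem~\ref{thm-idem-id} into a full singular equivalence of Morita type with level. So the first step is to observe that boundedness of $Ae\otimes_{eAe}^L eA$ forces $\pd{}_{eAe}eA<\infty$ (respectively $\pd Ae_{eAe}<\infty$): indeed, $Ae\otimes_{eAe}^L eA$ bounded means that $Ae$ is a perfect complex over $eAe$ on the appropriate side, hence has finite projective dimension as a one-sided $eAe$-module. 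Concretely, if $eA$ had infinite projective dimension over $eAe$, then tensoring a minimal $eAe$-projective resolution of $eA$ with $Ae$ would produce unbounded cohomology after noting $Ae$ is faithfully flat / a generator in the relevant sense; I would make this precise using that $e$ is an idempotent so $eAe\cdot e = eAe$ appears as a summand.

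With that in hand, the second step is a straightforward case split. If $\pd{}_A(\tfrac{A/AeA}{\rad(A/AeA)})<\infty$, then together with $\pd{}_{eAe}eA<\infty$ the hypotheses of Theorem~\ref{thm-idem-pd} are met, so $A$ and $eAe$ are singularly equivalent of Morita type with level. Dually, if $\id{}_A(\tfrac{A/AeA}{\rad(A/AeA)})<\infty$, then $\pd Ae_{eAe}<\infty$ follows from boundedness of $Ae\otimes_{eAe}^L eA$ on the other side, and Theorem~\ref{thm-idem-id} applies to give a singular equivalence of Morita type with level between $A$ and $eAe$. In either case we land on the same conclusion: $A$ and $eAe$ are singularly equivalent of Morita type with level, using here that $A/\rad(A)$ separable implies $eAe/\rad(eAe)$ separable (since $eAe/\rad(eAe)$ is a corner of $A/\rad(A)$, or can be obtained as a direct summand after passing to the semisimple quotient).

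The final step is to invoke the invariance results of this section: Proposition~\ref{prop-syz} shows syzygy-finiteness and the Igusa-Todorov property are preserved under singular equivalence of Morita type with level, and Proposition~\ref{prop-inj-gen} does the same for injectives generation and projectives cogeneration. Applying these to the equivalence between $A$ and $eAe$ produced in the previous step yields the ``if and only if'' statement. I expect the only real obstacle to be the first step --- cleanly extracting $\pd{}_{eAe}eA<\infty$ (resp. $\pd Ae_{eAe}<\infty$) from the cohomological boundedness of $Ae\otimes_{eAe}^L eA$; everything afterward is a direct citation of Theorem~\ref{thm-idem-pd}, Theorem~\ref{thm-idem-id}, Proposition~\ref{prop-syz} and Proposition~\ref{prop-inj-gen}. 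One should also double-check that the separability of $eAe/\rad(eAe)$ genuinely follows from that of $A/\rad(A)$, since both invoked theorems require it for the corner algebra as well.
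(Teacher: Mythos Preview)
Your overall architecture---reduce to Theorem~\ref{thm-idem-pd} or Theorem~\ref{thm-idem-id}, then invoke Proposition~\ref{prop-syz} and Proposition~\ref{prop-inj-gen}---matches the paper's. The gap is entirely in your Step~1, and it is a real one.

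You claim that boundedness of $Ae\otimes_{eAe}^L eA$ alone forces $\pd{}_{eAe}eA<\infty$ (and, on the other side, $\pd Ae_{eAe}<\infty$). This is not correct, and your heuristic ``$Ae$ is faithfully flat / a generator'' does not rescue it. As a right $eAe$-module one has $Ae\cong eAe\oplus (1-e)Ae$; the free summand $eAe$ contributes nothing to $\Tor_{>0}^{eAe}(-,eA)$, so the boundedness hypothesis only controls $\Tor_*^{eAe}((1-e)Ae,eA)$, which need not detect all simple $eAe$-modules. More concretely, if $\pd Ae_{eAe}<\infty$ then $Ae\otimes_{eAe}^L eA$ is automatically bounded regardless of $\pd{}_{eAe}eA$; so boundedness cannot imply $\pd{}_{eAe}eA<\infty$ in general. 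Your sentence ``boundedness \ldots\ means that $Ae$ is a perfect complex over $eAe$ on the appropriate side'' is simply false: bounded $\Tor$ between two modules does not make either of them perfect.

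The paper does \emph{not} separate the two hypotheses the way you do. In the $\pd$ case it uses the recollement triangle $j_!j^*A\to A\to i_*i^*A\to$ as follows: boundedness of $Ae\otimes_{eAe}^L eA=j_!j^*A$ forces $i_*i^*A\in\mathcal{D}^b(\mod A)$; since the cohomologies of $i_*i^*A$ lie in $\mod A/AeA$, the assumption $\pd_A\bigl(\tfrac{A/AeA}{\rad(A/AeA)}\bigr)<\infty$ upgrades this to $i_*i^*A\in K^b(\proj A)$, hence $j_!j^*A\in K^b(\proj A)$; finally, full faithfulness of $j_!$ (so that $j_!X$ is compact iff $X$ is) yields $eA=j^*A\in K^b(\proj eAe)$, i.e.\ $\pd{}_{eAe}eA<\infty$. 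The $\id$ case is handled by a dual but more delicate argument producing $\pd Ae_{eAe}<\infty$. In short, the $\pd$/$\id$ hypothesis is needed \emph{twice}: once to obtain the one-sided finiteness over $eAe$, and once (implicitly) inside Theorem~\ref{thm-idem-pd}/\ref{thm-idem-id}. Fix Step~1 along these lines and the rest of your plan goes through.
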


\begin{proof}
Consider the functors $i^*, i_*, i^!, j_!,j^*$ and $j_*$ in the proof of Theorem~\ref{thm-idem-pd}.
If $Ae\otimes _{eAe}^LeA\in \mathcal{D}^b(A)$ and $\pd _A(\frac {A/AeA}{\rad (A/AeA)})
<\infty$, then $j_!j^*A\in \mathcal{D}^b(A)$
and thus $i_*i^*A \in \mathcal{D}^b(A)$ by the canonical triangle
$j_!j^*A\rightarrow A\rightarrow i_*i^*A\rightarrow.$
Since $i_*i^*A\in \Ker j^*$, it follows that all cohomologies
of $i_*i^*A$ are in $\mod A/AeA$. Therefore, $i_*i^*A\in \tria (\mod A/AeA)=\tria (\frac {A/AeA}{\rad (A/AeA)})
\in K^b(\proj A)$. Applying the triangle given above, we have
$j_!j^*A\in K^b(\proj A)$ and then $j^*A=eA\in K^b(\proj eAe)$, see
\cite[Lemma 4.2]{AKLY17}. Therefore, $\pd _{eAe}eA<\infty$ and by
Theorem~\ref{thm-idem-pd}, $A$ and $eAe$ are singularly equivalent of Morita type with level.
Now the statement follows from
Proposition~\ref{prop-syz} and Proposition~\ref{prop-inj-gen}.

If $Ae\otimes _{eAe}^LeA\in \mathcal{D}^b(A)$ and $\id _A(\frac {A/AeA}{\rad (A/AeA)})
<\infty$, then $j_!j^*A\in \mathcal{D}^b(A)$.
Therefore, we obtain $j_*j^*(DA)\in \mathcal{D}^b(A)$ by the isomorphisms
$$H^n(j_*j^*(DA))\cong \Hom _{\mathcal{D}A}(A, j_*j^*(DA)[n])\cong \Hom _{\mathcal{D}A}(j_!j^*A, DA[n]),$$
and then $i_*i^!(DA)\in \mathcal{D}^b(A)$ by the canonical triangle
$i_*i^!(DA)\rightarrow DA\rightarrow j_*j^*(DA)\rightarrow.$
Since $i_*i^!(DA)\in \Ker j^*$, it follows that all cohomologies
of $i_*i^!(DA)$ are in $\mod A/AeA$. Therefore, $i_*i^!(DA)\in \tria (\mod A/AeA)=\tria (\frac {A/AeA}{\rad (A/AeA)})
\in K^b(\inj A)$. Applying the triangle given above, we get
$j_*j^*(DA)\in K^b(\inj A)$. Now we claim $j_!$ restricts to $\mathcal{D}^b(\mod)$.
For this, take $X\in \mathcal{D}^b(eAe)$ and consider the isomorphisms
$$DH^n(j_!X)\cong H^{-n}(D(j_!X))\cong \Hom_{\mathcal{D}k}(j_!X,k[-n])
\cong \Hom_{\mathcal{D}A}(j_!X,DA[-n]),$$
where the last isomorphism follows by adjunction. Therefore,
\begin{equation}\label{iso}
DH^n(j_!X)\cong \Hom_{\mathcal{D}(eAe)}(X,j^*DA[-n])\cong \Hom_{\mathcal{D}A}(j_*X,j_*j^*DA[-n]).
\end{equation}
Since $H^n(j_*X)\cong \Hom_{\mathcal{D}A}(A,j_*X[n])\cong \Hom_{\mathcal{D}(eAe)}(j^*A,X[n])$,
we infer that $H^n(j_*X)=0$, for any sufficiently small $n$.
As $j_*j^*(DA)\in K^b(\inj A)$, it follows from the formula~\ref{iso}
that $H^n(j_!X)=0$ for sufficiently small $n$. Moreover, by
\cite[Lemma 2.10 (b)]{AKLY17}, we get $H^n(j_!X)=0$ for sufficiently large $n$.
Therefore, $j_!$ restricts to $\mathcal{D}^b(\mod)$ and thus
$\pd Ae_{eAe}<\infty$, see \cite[Lemma 2.8 ]{AKLY17}.
Now the statement follows from Theorem~\ref{thm-idem-id},
Proposition~\ref{prop-syz} and Proposition~\ref{prop-inj-gen}.
\end{proof}

Applying Proposition~\ref{prop-idem}, Proposition~\ref{prop-syz}
and Proposition~\ref{prop-inj-gen}, we get the following result
for path algebras.
\begin{corollary}\label{cor-path-alg}
Let $A=kQ/I$ be a quotient of a path algebra
such that
$A/\rad(A)$ is separable over $k$. Choose
a vertex $v$ in $Q$ where no relations start or no relations end, and let $e$ be the
sum of all idempotents corresponding to
vertices except
$v$. Then $A$ has the property of syzygy-finite (resp. Igusa-Todorov, injectives
generation, projectives cogeneration) if and only if so does $eAe$.
\end{corollary}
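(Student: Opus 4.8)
The plan is to reduce Corollary~\ref{cor-path-alg} directly to Proposition~\ref{prop-idem} via Proposition~\ref{prop-syz} and Proposition~\ref{prop-inj-gen}, so the only real content is the combinatorial translation of the hypothesis ``$v$ is a vertex where no relations start (or no relations end)'' into the homological condition ``$\pd_A(\frac{A/AeA}{\rad(A/AeA)})\le 1$'' (or dually the injective-dimension condition). Write $e=\sum_{w\ne v}e_w$, so that $A/AeA$ is the local (indeed, semisimple, since it is $k$ with possibly a loop-generated radical killed by relations) algebra $e_vAe_v/(e_v\rad^2 A\,e_v)$ supported at $v$, and $\frac{A/AeA}{\rad(A/AeA)}\cong S_v$, the simple module at $v$.

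First I would compute a minimal projective presentation of $S_v$ over $A$. The projective cover is $P_v=Ae_v$, and $\rad P_v$ has a decomposition governed by the arrows out of $v$: each arrow $\alpha\colon v\to w$ contributes a generator, so there is a surjection $\bigoplus_{\alpha\colon v\to\bullet}P_{t(\alpha)}\twoheadrightarrow\rad P_v$. The kernel of this map is generated precisely by the relations of $I$ that \emph{start} at $v$ (read as $k$-linear combinations of paths beginning with an arrow out of $v$). Hence ``no relations start at $v$'' says exactly that this surjection is an isomorphism, i.e. $\rad P_v$ is projective, i.e. $\pd_A S_v\le 1$. This is the key step, and I expect the bookkeeping with the path-algebra conventions (left vs.\ right modules, the direction in which relations are composed, and the fact that one must quotient $A/AeA$ before reading off $S_v$) to be the main obstacle — it is routine but easy to get backwards. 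Dually, ``no relations end at $v$'' translates, via the same analysis applied to $A^{op}$ (equivalently to injective envelopes), into $\id_A S_v\le 1$; here one uses that the injective envelope of $S_v$ is $D(e_vA)$ and that $\rad$ of the corresponding right-module cover is free exactly when no relation terminates with an arrow into $v$.

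Given this, the proof closes quickly: if no relations start at $v$ then $\pd_A(\frac{A/AeA}{\rad(A/AeA)})=\pd_A S_v\le 1$, and if no relations end at $v$ then $\id_A(\frac{A/AeA}{\rad(A/AeA)})=\id_A S_v\le 1$. In either case Proposition~\ref{prop-idem} applies (using that $A/\rad A$ is separable), so $A$ and $eAe$ are singularly equivalent of Morita type with level. Finally, Proposition~\ref{prop-syz} transfers syzygy-finiteness and the Igusa--Todorov property between $A$ and $eAe$, and Proposition~\ref{prop-inj-gen} transfers injectives generation and projectives cogeneration, which gives all four equivalences asserted in the statement.
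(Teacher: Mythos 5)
Your proposal is correct and follows the same route as the paper: identify $\frac{A/AeA}{\rad(A/AeA)}$ with the simple $S_v$, translate ``no relations start (resp.\ end) at $v$'' into $\pd_A S_v\le 1$ (resp.\ $\id_A S_v\le 1$), and then apply Proposition~\ref{prop-idem} followed by Propositions~\ref{prop-syz} and~\ref{prop-inj-gen}. The only difference is that where you sketch the minimal projective presentation of $S_v$ and read the kernel off the relations, the paper simply cites Bongartz (\cite[Corollary, Section~1.1]{Bon83}), which packages exactly this computation (the precise statement being that the number of minimal relations from $v$ to $w$ equals $\dim_k \Ext^2_A(S_v,S_w)$, so vanishing of all these dimensions forces the kernel of your surjection to be zero by Nakayama); your parenthetical description of $A/AeA$ as semisimple and as $e_vAe_v/(e_v\rad^2A\,e_v)$ is not quite accurate when $v$ carries loops, but this does not affect the argument since all that is used is $\frac{A/AeA}{\rad(A/AeA)}\cong S_v$.
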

\begin{proof}
Clearly, $\frac {A/AeA}{\rad (A/AeA)}$ is nothing but the simple module
corresponding to the vertex $v$. Hence, it follows from
\cite[Corollary, Section 1.1]{Bon83} that $\pd _A(\frac {A/AeA}{\rad (A/AeA)})\leq 1$
(resp. $\id _A(\frac {A/AeA}{\rad (A/AeA)})\leq 1$)
if and only if no relation starts (resp. ends) in $v$.
Now, the statement follows from Proposition~\ref{prop-idem},
Proposition~\ref{prop-syz} and Proposition~\ref{prop-inj-gen}.

\end{proof}

Next, we will apply our results to produce some examples of
algebras which have the properties of syzygy-finite, Igusa-Todorov and injectives generation.
\begin{example}{\rm (\cite[Example 3.3]{Chen14})
Let $\Gamma$ be the $k$-algebra given by the following quiver
$$\xymatrix{1 \ar@<+0.5ex>[r]^\alpha & \ast
\ar@<+0.5ex>[l]^\beta \ar@(ul,ur)^x \ar@<+0.5ex>[r]^\delta & 2
\ar@<+0.5ex>[l]^\gamma}$$
with relations
$\{x^2, \delta x, \beta x, x\gamma, x \alpha, \beta \gamma, \delta \alpha,
\beta \alpha , \delta \gamma, \alpha \beta -\gamma \delta\}$. We write the concatenation of paths
from right to left.
The singularity category of $\Gamma$ is studied in \cite[Example 3.3]{Chen14}.
Indeed, there is an equivalence $D_{sg}(\Gamma)\cong D_{sg}(\Gamma /\Gamma e_1 \Gamma)$
induced by a homological ideal.
Note that $\Gamma /\Gamma e_1 \Gamma$ is a monomial algebra, and then
it has the properties of syzygy-finite and injectives generation.
Applying Corollary~\ref{coro-homo-epic-syz}, we
obtain that $\Gamma$ also has the properties of syzygy-finite and injectives generation.
}
\end{example}

\begin{example}
{\rm (\cite[Example 4.4]{FS92}) Let $A$ be the $k$-algebra given by the following quiver
$$\xymatrix{1 \ar@<+0.5ex>[rr]^\alpha \ar[rd]_\beta & & 2 \ar@<+0.5ex>[ld]^\delta
\ar@<+0.5ex>[ll]^\gamma \\
& 3 \ar@<+0.5ex>[ru]^\varepsilon & }$$
with relations
$\{\delta \alpha, \gamma \alpha, \beta \gamma, \gamma \varepsilon \beta, \delta \varepsilon \delta,
   \alpha \gamma -\varepsilon \delta\}$.
Let $e=e_1+e_2$, and then $eAe$ is the monomial algebra
$\xymatrix{1 \ar@<+1.5ex>[r]^{\tau _{\alpha}} \ar@<-1.5ex>[r]_{\tau _{\varepsilon \beta}} & 2 \ar[l]|{\tau _{ \gamma}}
  }$ with relations $\tau _{ \gamma}\tau _{\alpha}, \tau _{\varepsilon \beta} \tau _{ \gamma},
  \tau _{ \gamma} \tau _{\varepsilon \beta}$. Therefore, the algebra
$eAe$ has the properties of syzygy-finite and injectives generation.
Clearly, there is no relation starts in $3$, and it follows from Corollary~\ref{cor-path-alg}
that $A$ also has the properties of syzygy-finite and injectives generation.
}
\end{example}

\noindent {\footnotesize {\bf ACKNOWLEDGMENT.}
The author is grateful to Ren Wang for many helpful
discussions and suggestions. This work is supported by the National Natural Science
Foundation of China (Grant No.12061060 and 11701321).}


\begin{thebibliography}{99}

\bibitem{AKLY17} L. Angeleri H\"{u}gel, S. K\"{o}nig, Q. Liu and D.
Yang, Ladders and simplicity of derived module categories, J. Algebra, 472 (2017), 15--66.

\bibitem{AI07} L. Avramov and S. Iyengar, Constructing modules with prescribed cohomological support,
Illinois. J. Math. 51 (2007), 1--20.

\bibitem{BBD82} A. A. Beilinson, J. Bernstein and P. Deligne, Faisceaux pervers, Ast\'erisque, vol. 100, Soc. Math. France, 1982.

\bibitem{BGS88} A. A. Beilinson, V.A. Ginsburg and V.V. Schechtman, Koszul duality, J. Geom. Phys.
5 (1988), no. 3, 317--350.

\bibitem{Bon83} K. Bongartz, Algebras and quadratic forms, J. London Math. Soc. (2) 28 (1983), no. 3,
461--469.

\bibitem{Buch86} R.-O. Buchweitz, Maximal cohen-macaulay modules and tate-cohomology over
gorenstein rings, unpublished (1986), available at http://hdl.handle.net/1807/16682.
(1986).

\bibitem{CE56} H. Cartan and S. Eilenberg, Homological Algebra, Princeton University Press, 1956.

\bibitem{Chen09} X. W. Chen, Singularity categories, Schur functors and triangular matrix rings, Algebr. Represent. Theor. 12 (2009),
181--191.

\bibitem{Chen14} X. W. Chen, Singular equivalences induced by homological epimorphisms, Proc.
Amer. Math. Soc. 142 (2014), no. 8, 2633--2640.

\bibitem{CLhW20} X. W. Chen, H. H. Li and Z. F. Wang, Leavitt path algebras, $B_\infty $-algebras and Keller's conjecture
for singular Hochschild cohomology, arXiv:2007.06895v1.

\bibitem{CLW20} X. W. Chen, J. Liu, and R. Wang, Singular equivalences induced by bimodules
and quadratic monomial algebras, arXiv:2009.09356.

\bibitem{CS12} X. W. Chen and L. G. Sun, Singular equivalence of Morita type, preprint, (2012).


\bibitem{Chen18} Y. P. Chen, Support variety theory is invariant under singular equivalences of
Morita type, J. Pure Appl. Algebra 222 (2018), 2718--2736.

\bibitem{CHQW18} Y. P. Chen, W. Hu, Y. Y. Qin and R. Wang, Singular equivalences and Auslander-Reiten
conjecture, arXiv:2011.02729v1.

\bibitem{Cum19} C. Cummings, Ring constructions and generation of the unbounded derived
module category, arXiv:1904.13284v3.

\bibitem{Dal20} G. Dalezios, On singular equivalences of Morita type with level and Gorenstein algebras, arXiv:2001.05749v1.

\bibitem{EHS04} K. Erdmann, M. Holloway, N. Snashall, {\O}. Solberg and R. Taillefer, Support
varieties for selfinjective algebras, K-Theory 33 (2004), no. 1, 67--87.

\bibitem{FS92} K. Fuller and M. Saorin, On the finitistic dimension conjecture for Artinian rings,
Manuscripta Math. 74 (1992), no. 2, 117--132.

\bibitem{GL91} W. Geigle and H. Lenzing, Perpendicular categories with applications to representations
and sheaves, J. Algebra 144 (1991), no. 2, 273--343.

\bibitem{GPS18} E. L. Green, C. Psaroudakis and {\O}. Solberg, Reduction techniques
for the finitistic dimension, arXiv:1808.03564.

\bibitem{Han14} Y. Han, Recollements and Hochschild theory, J. Algebra 397 (2014), 535--547.

\bibitem{HLM09} F. Huard, M. Lanzilotta and O. Mendoza, Finitistic dimension through infinite projective dimension.
Bull. London Math. Soc. 41 (2009), 367--376.

\bibitem{Kel01} B. Keller, Unbounded derived categories and homological conjectures, in: Talk at Summer School on ``Homological Conjectures for Finite Dimensional Algebras'', Nordfjordeid, 2001.

\bibitem{Miy03} J. Miyachi, Recollement and tilting complexes, J. Pure Appl. Algebra 183 (2003),
245--273.

\bibitem{Nee92} A. Neeman, The connection between the K-theory localisation theorem of Thomason, Trobaugh and Yao,
and the smashing subcategories of Bousfield and Ravenel, Ann. Sci. \'{E}c. Norm. Sup\'{e}r. (4) 25 (1992), 547--566.

\bibitem{Orl04} D. Orlov, Triangulated categories of singularities and D-branes in Landau-Ginzburg models,
Trudy Steklov Math. Institute 204 (2004), 240--262.

\bibitem{PX06} J. A. de la Pe\~{n}a and C. C. Xi, Hochschild cohomology of algebras with homological
ideals, Tsukuba J. Math. 30 (2006), no. 1, 61--79.

\bibitem{PSS14} C. Psaroudakis, {\O}. Skarts{\ae}terhagen and {\O}. Solberg, Gorenstein categories, singular equivalences and finite generation of cohomology rings in recollements, Trans. Am. Math. Soc. Ser. B 1 (2014), 45--95.

\bibitem{Qin20} Y. Y. Qin, Eventually homological isomorphisms in recollements of derived categories, J.
Algebra 563 (2020), 53--73.

\bibitem{QH16} Y. Y. Qin and Y. Han, Reducing homological
conjectures by $n$-recollements, Algebr. Represent. Theor. 19 (2016), no. 2, 377--395.

\bibitem{Ric19} J. Rickard, Unbounded derived categories and the finitistic dimension conjecture,
Adv. Math, 354:106735, 21, (2019).

\bibitem{Rou08} R. Rouquier, Dimensions of triangulated categories, J. K-Theory 1(2008), no. 2, 193--256.

\bibitem{Shen20} D. W. Shen, A note on singular equivalences and idempotents, arXiv:2001. 05749v1.

\bibitem{Ska16} {\O}. Skarts{\ae}terhagen, Singular equivalence and the (Fg) condition,
J. Algebra 452 (2016), 66--93.

\bibitem{Wang15} Z. F. Wang, Singular equivalence of Morita type with level, J. Algebra 439 (2015),
245--269.

\bibitem{Wang19} Z. F. Wang, Invariance of the Gerstenhaber algebra structure on Tate-Hochschild
cohomology, J. Inst. Math. Jussieu (2019), 1--36.

\bibitem{Wei09} J. Q. Wei, Finitistic dimension and Igusa-Todorov algebras, Adv. Math. 222 (2009), 2215--2226.

\bibitem{Wei17} J. Q. Wei, Derived invariance by syzygy complexes, Math. Proc. Camb. Phil. Soc. 164 (2017), no. 2, 1--19.

\bibitem{WW20} K. L. Wu and J. Q. Wei, Syzygy properties under recollements of derived categories, preprint, (2020).

\bibitem{ZZ13} G. D. Zhou and A. Zimmermann, On singular equivalence of Morita type, J.
Algebra 385 (2013), 64--79.

\bibitem{ZH95} B. Zimmermann-Huisgen, The finitistic dimension conjectures a tale of 3.5 decades. Abelian
groups and modules (Padova, 1994), Math. Appl., 343, Kluwer Acad. Publ., Dordrecht, 501--517, 1995.

\end{thebibliography}
\end{document}